\LetLtxMacro{\oldsqrt}{\sqrt}
\renewcommand{\sqrt}[2][]{\,\oldsqrt[#1]{#2}\,}
\def\@tocline#1#2#3#4#5#6#7{\relax
  \ifnum #1>\c@tocdepth 
  \else
    \par \addpenalty\@secpenalty\addvspace{#2}%
    \begingroup \hyphenpenalty\@M
    \@ifempty{#4}{%
      \@tempdima\csname r@tocindent\number#1\endcsname\relax
    }{%
      \@tempdima#4\relax
    }%
    \parindent\z@ \leftskip#3\relax \advance\leftskip\@tempdima\relax
    \rightskip\@pnumwidth plus4em \parfillskip-\@pnumwidth
    #5\leavevmode\hskip-\@tempdima
      \ifcase #1
       \or\or \hskip 1em \or \hskip 2em \else \hskip 3em \fi%
      #6\nobreak\relax
    \dotfill\hbox to\@pnumwidth{\@tocpagenum{#7}}\par
    \nobreak
    \endgroup
  \fi}
\def\greekbolds#1{%
 \@for\next:=#1\do{%
    \def\X##1;{%
     \expandafter\def\csname V##1\endcsname{\boldsymbol{\csname##1\endcsname}}
     }
   \expandafter\X\next;
  }
}
\def\make@bb#1{\expandafter\def
  \csname bb#1\endcsname{{\mathbb{#1}}}\ignorespaces}
\def\make@bbm#1{\expandafter\def
  \csname bb#1\endcsname{{\mathbbm{#1}}}\ignorespaces}
\def\make@bf#1{\expandafter\def\csname bf#1\endcsname{{\bf
      #1}}\ignorespaces} 
\def\make@gr#1{\expandafter\def
  \csname gr#1\endcsname{{\mathfrak{#1}}}\ignorespaces}
\def\make@scr#1{\expandafter\def
  \csname scr#1\endcsname{{\mathscr{#1}}}\ignorespaces}
\def\make@cal#1{\expandafter\def\csname cal#1\endcsname{{\mathcal
      #1}}\ignorespaces} 
\def\do@Letters#1{#1A #1B #1C #1D #1E #1F #1G #1H #1I #1J #1K #1L #1M
                 #1N #1O #1P #1Q #1R #1S #1T #1U #1V #1W #1X #1Y #1Z}
\def\do@letters#1{#1a #1b #1c #1d #1e #1f #1g #1h #1i #1j #1k #1l #1m
                 #1n #1o #1p #1q #1r #1s #1t #1u #1v #1w #1x #1y #1z}
\newcommand{\uln}{{\underline{n}}}
\newcommand{\abs}[1]{\lvert #1 \rvert}
\newcommand{\fl}[1]{\left\lfloor #1 \right\rfloor}
\newcommand{\wh}{\widehat}
\newcommand{\wt}{\widetilde}
\newcommand{\Lsymb}[2]{\genfrac{(}{)}{}{}{#1}{#2}}  
\newcommand{\brN}{\breve{\mathbb{N}}}
\newcommand{\qalg}[3]{\left(\frac{#1, #2}{#3}\right)}
\DeclareMathSymbol{\twoheadrightarrow} {\mathrel}{AMSa}{"10}
\DeclareMathOperator{\pr}{pr}
\DeclareMathOperator{\fchar}{char}
\DeclareMathOperator{\Ov}{Ov}
\DeclareMathOperator{\Nr}{Nr}
\DeclareMathOperator{\diag}{diag}
\DeclareMathOperator{\End}{End}
\DeclareMathOperator{\Hom}{Hom}
\DeclareMathOperator{\Gal}{Gal}
\DeclareMathOperator{\Mat}{Mat}
\DeclareMathOperator{\Tr}{Tr}
\DeclareMathOperator{\Nm}{N}  
\DeclareMathOperator{\GL}{GL}
\DeclareMathOperator{\SL}{SL}
\DeclareMathOperator{\Ssp}{SSp}
\newcommand{\Qbar}{\bar{\mathbb{Q}}}
\newcommand{\Z}{\mathbb Z}
\newcommand{\Q}{\mathbb Q}
\newcommand{\R}{\mathbb R}
\newcommand{\F}{\mathbb F}
\newcounter{thmcounter} 
\numberwithin{thmcounter}{section}  
\newtheorem{thm}[thmcounter]{Theorem}
\newtheorem{lem}[thmcounter]{Lemma}
\newtheorem{prop}[thmcounter]{Proposition}
\theoremstyle{definition}
\newtheorem{defn}[thmcounter]{Definition}
\newtheorem{rem}[thmcounter]{Remark}
\numberwithin{equation}{section}
\newtheoremstyle{notitle}  
  {}
  {}
  {\itshape}
  {}
  {}
  {\ }
  {.5em}
  {}
\theoremstyle{notitle}
 \title[Superspecial abelian surfaces]{On Superspecial abelian surfaces over finite
 fields III}
\author{Jiangwei Xue}
\address{(Xue) Collaborative Innovation Center of Mathematics, School of
  Mathematics and Statistics, Wuhan University, Luojiashan, 430072,
  Wuhan, Hubei, P.R. China}   
\address{(Xue) Hubei Key Laboratory of Computational Science (Wuhan
  University), Wuhan, Hubei,  430072, P.R. China.}
\email{xue\_j@whu.edu.cn}
\author{Chia-Fu Yu}
\address{(Yu) Institute of Mathematics,
  Academia Sinica and NCTS, Astronomy-Mathematics
  Building, No. 1, Sec. 4, Roosevelt Road, Taipei 10617, TAIWAN.}
\email{chiafu@math.sinica.edu.tw}
\author{Yuqiang Zheng}
\address{(Zheng) School of Mathematics and Statistics, 
  Wuhan University, Luojiashan, 430072, Wuhan, Hubei, P.R. China}
\email{zhhhhxhq@whu.edu.cn}
\curraddr{(Zheng) Academy of Mathematics and Systems Science, Chinese
  Academy of Science, No. 55, Zhongguancun East Road, Beijing 100190,
  China}
\email{zhengyq@amss.ac.cn}
\begin{document}
\date{\today} 
 \subjclass[2020]{11R52, 11G10} 
 \keywords{superspecial  abelian 
 surfaces, quaternion algebra, Bass order, conjugacy classes of
 arithmetic subgroups}
\begin{abstract}
  In the paper [On superspecial abelian surfaces over finite fields {II}.
  {\em J. Math. Soc. Japan}, 72(1):303--331, 2020], Tse-Chung Yang and the
  first two current authors computed explicitly the number $\lvert \mathrm{SSp}_2(\mathbb{F}_q)\rvert$ of isomorphism classes
  of superspecial abelian surfaces over an arbitrary finite field $\mathbb{F}_q$
   of
  \emph{even} degree over the prime field $\mathbb{F}_p$. There it was
  assumed that certain commutative $\mathbb{Z}_p$-orders satisfy an \'etale condition
  that excludes the primes $p=2, 3, 5$. We treat these
  remaining primes in the present paper, where the computations are
  more involved because of the ramifications. This 
completes the calculation of $\lvert \mathrm{SSp}_2(\mathbb{F}_q)\rvert$ in the even degree
case. The odd degree case was previous treated by Tse-Chung Yang and
the first two current authors in [On superspecial abelian surfaces over finite fields.{\em Doc. Math.}, 21:1607--1643, 2016]. Along the proof of our main theorem, we give the classification of lattices over local quaternion Bass orders,
which is a new input to our previous works. 
\end{abstract}

\maketitle


\section{Introduction}

Throughout this paper, $p$ denotes a prime number, $q=p^a$ a power of
$p$, and $\F_q$ the finite field of $q$-elements. We reserve $\bbN$
for the set of strictly positive integers.  Let $k$ be a field of
characteristic $p$, and $\bar{k}$  an algebraic closure of $k$.  An
abelian variety over $k$ is said to be {\it supersingular} if it is
isogenous to a product of supersingular elliptic curves over $\bar k$;
it is said to be {\it superspecial} if it is isomorphic to a product
of supersingular elliptic curves over $\bar k$.  For any $d\in \bbN$,
denote by $\Ssp_d(\F_q)$ the set of $\F_q$-isomorphism classes of
$d$-dimensional superspecial abelian varieties over $\F_q$.  The
classification of supersingular elliptic curves (namely, the $d=1$ case) over finite fields
were carried out by Deuring \cite{Deuring1950, Deuring-1941}, Eichler
\cite{eichler-CNF-1938}, Igusa\cite{igusa}, Waterhouse \cite{waterhouse:thesis}  and many others since the
1930s.

In a series of papers
\cite{xue-yang-yu:sp_as,xue-yang-yu:sp_as2,xue-yang-yu:ECNF,
  xue-yang-yu:num_inv}, Tse-Chung Yang and the first two current authors attempt to calculate the cardinality
$\abs{\Ssp_d(\F_q)}$ explicitly in the case $d=2$. More precisely, it
is shown in \cite{xue-yang-yu:sp_as} that for every fixed $d>1$,
$\abs{\Ssp_d(\F_q)}$ depends only on the parity of the degree
$a=[\F_q:\F_p]$, and an explicit formula of $\abs{\Ssp_2(\F_q)}$ is
provided for the odd degree case.  The most involving part of
this explicit calculation is carried out prior in \cite{xue-yang-yu:ECNF,
  xue-yang-yu:num_inv}, which counts the number of isomorphism classes
of abelian surfaces over $\F_p$ within the simple isogeny class
corresponding to the Weil $p$-numbers
$\pm\sqrt{p}$. For the even degree case, an explicit formula of
$\abs{\Ssp_2(\F_q)}$ is obtained in \cite{xue-yang-yu:sp_as2} under \emph{a
mild condition on $p$} (see Remark~3.7 of loc. cit.), which holds for all $p\geq 7$. We treat the
remaining primes $p\in \{2, 3, 5\}$ in the present paper, thus
completing the calculation of $\abs{\Ssp_2(\F_q)}$ in the even degree
case. 

For the rest of the paper, we assume that $q=p^a$ is an \emph{even} power of $p$.
All isogenies and isomorphisms are over the base field $\F_q$ unless
specified otherwise.  The set $\Ssp_2(\F_q)$ naturally partitions into
subsets by isogeny equivalence, which can be parametrized by
(multiple) Weil numbers (see \cite[\S 4.1]{xue-yang-yu:sp_as}). For
each integer $n\in \bbN$, let $\zeta_n$ be a primitive $n$-th root of unity,
and $\pi_n$ be the Weil $q$-number $(-p)^{a/2}\zeta_n$. By the
Honda-Tate theorem, there is a unique simple abelian variety $X_n/\F_q$ up to isogeny corresponding to the
$\Gal(\Qbar/\Q)$-conjugacy class of $\pi_n$.  Moreover, the $X_n$'s
are mutually non-isogenous for distinct $n$.  Thanks to the Manin-Oort
Theorem \cite[Theorem~2.9]{Yu-End-QM-2013}, a simple abelian variety over $\F_q$ is supersingular if and
only if it is isogenous to $X_n$ for some $n$. Let $d(n)$ be the
dimension of $X_n$. The formula for $d(n)$ is given in \cite[\S
3]{xue-yang-yu:sp_as}.  When $p\in \{2,3,5\}$, we have
\begin{itemize}
\item $d(n)=1$ if and only if $n\in\{1,2, 3, 6\}$ or $(n, p)\in \{(4,
  2), (4, 3)\}$;
\item $d(n)=2$ if and only if $(n,p)=(4,5)$ or $n\in \{5,8,10,12\}$. 
\end{itemize}

Given a superspecial abelian surface $X/\F_q$, we have two
cases to consider:
\begin{enumerate}[label=(\Roman*)]
\item \textbf{the isotypic case} where $X$ is isogenous to 
  $X_n^{2/d(n)}$ for some $n\in \bbN$ with $d(n)\leq 2$; 
\item \textbf{the non-isotypic case} where $X$ is isogenous to $X_\uln:=X_{n_1}\times X_{n_2}$ for a pair $\uln=(n_1,
n_2)\in \bbN^2$ with $n_1<n_2$ and
$d(n_1)=d(n_2)=1$. 
\end{enumerate}
Let $o(n)$ (resp.~$o(\uln)$) denote the number of isomorphism classes
of superspecial abelian surfaces over $\F_q$ that are isogenous to
$X_n^{2/d(n)}$ (resp. $X_\uln$). It was shown in \cite[\S
3.2]{xue-yang-yu:sp_as2} that 
\begin{gather}
  o(1)=o(2)=1,\ o(3)=o(6), \ o(5)=o(10); \label{eq:2} \\
  o(1,3)=o(2,6), \  o(1,4)=o(2,4), \ o(1,6)=o(2,3), \
o(3,4)=o(4,6). \label{eq:3}
\end{gather}
Thus we have 
\begin{equation}
  \label{eq:1}
  \begin{split}
\abs{\Ssp_2(\F_q)}=&2+2o(3)+o(4)+2o(5)+o(8)+o(12) \\
          &+o(1,2)+2o(2,3)+2o(2,4)+2o(2,6)+2o(3,4)+o(3,6).
  \end{split}
\end{equation}

As mentioned before, the value of each $o(n)$ or $o(\uln)$ in (\ref{eq:1})
has been worked out in \cite{xue-yang-yu:sp_as2} \emph{conditionally} on $p$.
To make explicit this condition, we uniformize the notation. For each
$r\in \bbN$, let us denote
\[\brN^r:=\{\uln=(n_1, \cdots, n_r)\in \bbN^r\mid 0<n_1<\cdots
<n_r\}. \]
In particular, if $r=1$, then $\brN^r=\bbN$ and we drop the underline
from $\uln$. For each  $\uln\in \brN^r$ with $r$ arbitrary, we define 
\begin{equation}
  \label{eq:5}
   A_\uln:=\frac{\Z[T]}{\left(\prod_{i=1}^r \Phi_{n_i}(T)\right)}, \qquad
K_\uln:=\frac{\Q[T]}{\left(\prod_{i=1}^r \Phi_{n_i}(T)\right)}\simeq \prod_{i=1}^r
\Q(\zeta_{n_i}), 
\end{equation}
where $\Phi_n(T)\in \Z[T]$ is the $n$-th cyclotomic
polynomial. Clearly, $A_\uln$ is a $\Z$-order in $K_\uln$, so it is contained in
the unique  maximal order $O_{K_\uln}:=\prod_{i=1}^r \Z[T]/(\Phi_{n_i}(T))$. 
Let $\uln\in \brN^r$ with $r\in \{1,2\}$ be an $r$-tuple appearing
in (\ref{eq:1}).  In the proof of
\cite[Theorem~3.3]{xue-yang-yu:sp_as2}, it is assumed that 
\begin{equation}
  \label{eq:6}
A_{\uln, p}:=A_\uln\otimes \Z_p \quad \text{is an \'etale $\Z_p$-order}.   
\end{equation}
This condition fails precisely in the following two situations:
\begin{enumerate}[label=(C\arabic*)]
\item $p$ is  ramified in $\Q(\zeta_{n_i})$ for some $1\leq i\leq r$, or  
\item $p$ divides the index $[O_{K_\uln}: A_\uln]$. 
\end{enumerate}
If $r=1$, then $A_n$ coincides with $O_{K_n}$, so (C2) is possible
only if $r=2$. For the reader's convenience, we list the  indices
$i(\uln):=[O_{K_\uln}: A_\uln]$ from \cite[Table~1]{xue-yang-yu:sp_as2}:

\begin{center}
  \renewcommand{\arraystretch}{1.2}
\begin{tabular}{*{7}{|>{$}c<{$}}|}
\hline
\uln & (1, 2) & (2, 3) & (2, 4) & (2, 6) & (3, 4) & (3, 6)\\
\hline
i(\uln)& 2 & 1 & 2 & 3 & 1 & 4\\
\hline
\end{tabular}
\end{center}

  


\begin{thm}\label{thm:main}
Let $\uln\in \brN^r$  be an $r$-tuple appearing
in (\ref{eq:1}), and $p$ be a prime satisfying (C1) or
(C2). Then the values of $o(\uln)$ for each $p$ are given by the following table
\begin{center}
\begin{tabular}{*{13}{|>{$}c<{$}}|}
\hline
  \uln & 3 & 4 & 5 & 8 & 12 & (1, 2) & (2, 3) & (2, 4) & (2, 6) & (3,
                                                                  4) &
                                                                       \multicolumn{2}{c|}{$(3,
                                                                       6)$}
  \\
\hline
p & 3 & 2 & 5 & 2 & 2,3 & 2 & 3 &2 & 3 & 2, 3& 2 & 3\\
\hline
o(\uln)& 2 & 2 & 1 & 1 & 3 & 3 & 1 & 2 & 3 & 2 & 8 & 2\\
\hline
\end{tabular}
\end{center}
Moreover, the number of isomorphism classes of superspecial abelian surfaces over a finite field $\F_q$ of even degree over $\F_p$ with $p\in \{2, 3, 5\}$ is given by 
\begin{equation}\label{eq:9}
    \abs{\Ssp_2(\F_q)}=\begin{cases}
    49  &\text{if } p=2,\\
    45 &\text{if } p=3,\\
    47 &\text{if } p=5.\\
    \end{cases}
\end{equation}
\end{thm}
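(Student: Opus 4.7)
The plan is to extend the class-number computation of \cite{xue-yang-yu:sp_as2} to the bad primes $p\in\{2,3,5\}$ by carrying out a new local lattice analysis at $p$. For each $\uln$ appearing in (\ref{eq:1}), the set of superspecial abelian surfaces over $\F_q$ in the corresponding isogeny class is in bijection with the isomorphism classes of locally principal right ideals of a specific quaternion order $\Lambda$ over $A_\uln$ inside a totally definite quaternion $K_\uln$-algebra $B_\uln$ ramified at $p$. Under the \'etale assumption (\ref{eq:6}), the local factor $\Lambda_p$ at $p$ is a hereditary or maximal order over an \'etale base, so its ideal classes are easy to count and the global computation reduces to an Eichler-type mass formula plus standard local contributions away from $p$. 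When (C1) or (C2) holds this simplification breaks down and the structure of $\Lambda_p$ must be worked out by hand.

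The core technical step is to classify, for each pair $(\uln,p)$ in the table, the lattices over the relevant local quaternion $\Z_p$-order $\Lambda_p$. First I would identify $\Lambda_p$ explicitly as a quaternion order over $A_{\uln,p}$; the latter is either a ramified cyclotomic $\Z_p$-order (when (C1) holds) or a non-maximal order in a product of unramified extensions whose conductor is controlled by the index $i(\uln)$ (when (C2) holds). In every case $\Lambda_p$ turns out to be a quaternion \emph{Bass order}, i.e.\ every over-order is Gorenstein; this allows one to enumerate the finite chain of over-orders, describe every $\Lambda_p$-lattice as a direct sum of fractional ideals over these over-orders, and then extract those classes that arise as Dieudonn\'e modules of superspecial abelian surfaces in the given isogeny class. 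This lattice classification over local quaternion Bass orders, phrased in sufficient generality to cover every case that appears, is precisely the new input highlighted in the abstract.

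With the local count in hand, I would combine it with the global mass formula for $\Lambda$ and track the action of $\Lambda^\times$ needed to pass from ideal classes to isomorphism classes of abelian surfaces, in exact parallel with the framework of \cite{xue-yang-yu:sp_as2}. This produces the values of $o(\uln)$ listed in the table, after which the closed-form identity (\ref{eq:9}) follows by direct substitution into (\ref{eq:1}), using the symmetries (\ref{eq:2})--(\ref{eq:3}) and the values of the remaining $o(\uln)$ (those with $A_{\uln,p}$ \'etale) already computed in \cite[Theorem~3.3]{xue-yang-yu:sp_as2}. I expect the main obstacle to be the local lattice classification at $p=2$, in particular for $\uln=(3,6)$, where $i(\uln)=4$ and $o(\uln)=8$ is the largest entry of the table: here both (C1) and (C2) are simultaneously active, the chain of over-orders is the longest, and the unit bookkeeping required to descend from ideal classes to $\F_q$-isomorphism classes of abelian surfaces is the most delicate.
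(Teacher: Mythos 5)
Your high-level plan — reduce to a local lattice classification at $p$, use the theory of quaternion Bass orders for that local step, then combine with class numbers to get $o(\uln)$ — matches the paper's strategy, but the framing of the counting problem has a genuine gap and several details are off.

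\textbf{Genus decomposition, not a single ideal-class set.} You assert that the isogeny class is in bijection with ``isomorphism classes of locally principal right ideals of a specific quaternion order $\Lambda$.'' What the paper actually uses is the bijection of \eqref{eq:10}: $o(\uln)$ equals the number of isomorphism classes of $\scrA_\uln$-lattices in the fixed $\scrK_\uln$-module $V=D^2$. These lattices are not all locally isomorphic, so they are not ideal classes of any single order. The correct structure is the two-step one spelled out in Section~\ref{sec:gen-strategy}: (Step~1) classify genera, i.e.\ local isomorphism types at the bad primes $S(\uln,p)$; (Step~2) for a representative $\Lambda$ in each genus, the number of classes in that genus is $h(O_\Lambda)$, the class number of the \emph{endomorphism order} $O_\Lambda=\End_{\scrA_\uln}(\Lambda)$, by Swan's theorem \eqref{eq:12}. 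Because $O_\Lambda$ varies from genus to genus, your one-order picture cannot reproduce, say, $o(2,6)=3$ at $p=3$, where one genus has class number $1$ and the other has class number $2$. There is also no ``extract the Dieudonn\'e-module classes'' step — every lattice class counts.

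\textbf{The algebra is not a definite quaternion $K_\uln$-algebra ramified at $p$.} You set up a ``totally definite quaternion $K_\uln$-algebra $B_\uln$ ramified at $p$.'' In the paper the relevant algebra is $\scrK_\uln=K_\uln\otimes_\Q D$, which for the cyclotomic factors $K_n$ ($n\geq 3$) is $\Mat_2(K_n)$ — split, and $K_n$ is totally imaginary so ``totally definite'' does not apply. The Bass-order theory enters for the $\Z_p$-order $\scrA_{n,p}=A_{n,p}\otimes_{\Z_p}\calO_p$ viewed as a quaternion $A_{n,p}$-order inside $\Mat_2(K_{n,p})$ (or inside $D_p$ for $n\in\{1,2\}$), with Eichler invariant controlled by the splitting/ramification behavior of $p$ in $K_n$. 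Likewise, the paper does not invoke an Eichler mass formula: it computes $h(O_\Lambda)$ directly, usually by showing the relevant local unit map is surjective and concluding $h=1$.

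\textbf{Minor corrections.} For $\uln=(3,6)$ and $p=2$, only (C2) holds (2 is inert, not ramified, in $\Q(\zeta_3)=\Q(\zeta_6)$; $i(3,6)=4$ is what is divisible by 2), not ``both (C1) and (C2) simultaneously.'' Moreover the paper's proof that $o(3,6)=8$ at $p=2$ does not run through a long chain of over-orders: since $\scrA_{(3,6),2}$ is a subdirect sum of two copies of an Eichler order $\bbE$, the argument reduces mod $2$ to a finite classification of $\bar\bbE$-submodules of $\bar\Delta$ up to the unit action, and then counts $3+3+1+1=8$. Similarly, for $(3,6)$, $p=3$ one must work at both $\ell=2$ and $\ell=3$ since $S((3,6),3)=\{2,3\}$ — your plan of a purely $p$-local analysis would miss the contribution at $\ell=2$.
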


\begin{rem}\label{rem:arith}
  We provide an arithmetic interpretation of the values $o(\uln)$. Let
  $D=D_{p, \infty}$ be the unique quaternion $\Q$-algebra up to
  isomorphism ramified precisely at $p$ and $\infty$, and $\Mat_2(D)$
  be the algebra of $2\times 2$ matrices over $D$. Fix a maximal
  $\Z$-order $\calO$ in $D$. As explained in \cite[\S1,
  p.~304]{xue-yang-yu:sp_as2}, up to isomorphism, the arithmetic group
  $\GL_2(\calO)$  depends only on $p$ and not on the choice of $\calO$.
  An
  element $x\in \GL_2(\calO)$ of finite group order\footnote{Unfortunately,
    the word ``order'' plays double duties in this paragraph: 
    for the order inside an algebra and also for the order of a group
    element.  To make a distinction, we always insert the word ``group'' when the second
    meaning  applies.} 
 is semisimple, so its
  minimal polynomial over $\Q$ in $\Mat_2(D)$ is of the form
  $P_{\uln}(T):=\prod_{i=1}^r \Phi_{n_i}(T)$ for some
  $\uln=(n_1, \cdots, n_r)\in \brN^r$. It is not hard to show that
  $r\leq 2$ (see \cite[\S3.1]{xue-yang-yu:sp_as2}).  A Galois
  cohomological argument shows that $o(\uln)$ counts the number of
  conjugacy classes of elements of $\GL_2(\calO)$ with minimal
  polynomial $P_{\uln}(T)$, and $\abs{\Ssp_2(\F_q)}$ is equal to the
  total number
  of conjugacy classes of elements of finite group order in
  $\GL_2(\calO)$ (see
  \cite[Proposition~1.1]{xue-yang-yu:sp_as2}). Actually, this arithmetic interpretation works for $\GL_d(\calO)$ with any $d\ge 2$, not just for $d=2$.
\end{rem}

The proof of Theorem~\ref{thm:main} will occupy the remaining part of
the paper. In Section~\ref{sec:gen-strategy}, we recall from \cite[\S3.1]{xue-yang-yu:sp_as2} the
general strategy for computing $o(\uln)$. The isotypic case
(i.e.~$r=1$) will be treated in Section~\ref{sec:isotypic-case}, and
the non-isotypic case (i.e.~$r=2$) will be treated in
Section~\ref{sec:non-isotypic-case}.

\section{General strategy for computing $o(\uln)$}
\label{sec:gen-strategy}
Keep the notation and the assumptions of the previous section. We
recall from \cite[\S3.1]{xue-yang-yu:sp_as2} and \cite[\S6.4]{xue-yang-yu:sp_as} the general strategy for calculating $o(\uln)$ with
$\uln\in \brN^r$ for $r\leq 2$. 
  Based on the arithmetic interpretation of $o(\uln)$ in Remark~\ref{rem:arith}, we further
 provide  a lattice description of $o(\uln)$. Indeed, it is via
 this lattice description that the value of each $o(\uln)$ is calculated.


 Let $V=D^2$ be the unique simple left $\Mat_2(D)$-module, which is at
 the same time a
 right $D$-vector space of dimension $2$.  Let  $M_0:=\calO^2$ be the
 standard right $\calO$-lattice in $V$, whose endomorphism ring
 $\End_{\calO}(M_0)$ is just $\Mat_2(\calO)$. For each element $x\in \GL_2(\calO)$ of finite group order with
 minimal polynomial $P_{\uln}(T)\in \Z[T]$, there is a canonical
 embedding $A_\uln=\Z[T]/(P_\uln(T))\hookrightarrow \Mat_2(\calO)$
 sending $T$ to $x$. This embedding equips $M_0$ with an
 $(A_\uln, \calO)$-bimodule structure, or equivalently, a faithful
left $A_\uln\otimes_\Z\calO^{\mathrm{opp}}$-module structure.
Similarly, $V$ is equipped with a faithful left $K_\uln\otimes_\Q
D^{\mathrm{opp}}$-module structure. The
 canonical involution induces an isomorphism between the opposite
ring $\calO^{\mathrm{opp}}$ and $\calO$ itself (and similarly between
 $D^{\mathrm{opp}}$ and $D$), so we put
 \begin{equation}
  \label{eq:7}
\scrA_\uln:=A_\uln\otimes_\Z\calO,  \quad \text{and} \quad\scrK_\uln:=K_\uln\otimes_\Q D. 
\end{equation}
Clearly, $\scrA_\uln$ is a $\Z$-order in the semisimple $\Q$-algebra
$\scrK_\uln$. It has been shown in \cite[p.~309]{xue-yang-yu:sp_as2}
that the $\scrK_\uln$-module structure on $V$ is uniquely determined
by the $r$-tuple $\uln$.  From
\cite[Theorem~6.11]{xue-yang-yu:sp_as} (see also
\cite[Lemma~3.1]{xue-yang-yu:sp_as2}),  the above construction induces
a bijection between the following two finite sets:
\begin{equation}
  \label{eq:10}
  \left\{\parbox{1.7in}{conjugacy classes of elements of $\GL_2(\calO)$ with
      minimal polynomial $P_\uln(x)$}\right\}\longleftrightarrow
  \left\{\parbox{1.2in}{isomorphism classes of $\scrA_\uln$-lattices in
      the left $\scrK_\uln$-module $V$}\right\}
\end{equation}
Therefore, we have $o(\uln)=\abs{\scrL(\uln)}$,  where $\scrL(\uln)$
denote the set on the right. 

Now fix a pair $(\uln, p)$ in Theorem~\ref{thm:main} and
in turn a left $\scrK_\uln$-module $V$.  Given an $\scrA_\uln$-lattices
$\Lambda\subset V$, we write $[\Lambda]$ for its isomorphism class,
and $O_\Lambda$ for its endomorphism ring
$\End_{\scrA_\uln}(\Lambda)\subset \End_{\scrK_\uln}(V)$.  As a
convention, the endomorphism algebra $\scrE_\uln:=\End_{\scrK_\uln}(V)$ acts on
$V$ from the left, so it coincides with the centralizer of $K_\uln$ in
$\Mat_2(D)$. Two $\scrA_\uln$-lattices $\Lambda_1$ and $\Lambda_2$ in
$V$ are isomorphic if and only if there exists
$g\in \scrE_\uln^\times$ such that
$\Lambda_1=g\Lambda_2$.  

For each prime $\ell\in \bbN$, we use the subscript $_\ell$ to indicate
$\ell$-adic completion. For example,  $\scrA_{\uln, \ell}$ (the
$\ell$-adic completion of $\scrA_\uln$) is a
$\Z_\ell$-order in the semisimple $\Q_\ell$-algebra $\scrK_{\uln,
  \ell}$, and $\Lambda_\ell$ is an $\scrA_{\uln,
  \ell}$-lattice in $V_\ell$. 
 For each prime $\ell$, let $\scrL_\ell(\uln)$ denote the set of
isomorphism classes of $\scrA_{\uln, \ell}$-lattices in the left
$\scrK_{\uln, \ell}$-module $V_\ell$.  For almost all primes $\ell$,
the $\Z_\ell$-order $\scrA_{\uln, \ell}$ is 
maximal  in $\scrK_{\uln,
  \ell}$, in which case both of the following hold by
\cite[Theorem~26.24]{curtis-reiner:1}: 
\begin{enumerate}
\item[(i)] $\Lambda_\ell$ is uniquely determined up to
  isomorphism (i.e.~$\abs{\scrL_\ell(\uln)}=1$), and
\item[(ii)]   $O_{\Lambda, \ell}$ is maximal in $\scrE_{\uln,
  \ell}$. 
\end{enumerate}
Let $S(\uln, p)$ be the finite set of primes $\ell$ for which
$\scrA_{\uln, \ell}$ is non-maximal. 
The profinite completion $\Lambda\mapsto
\wh\Lambda:=\prod_{\ell}\Lambda_\ell$ induces a surjective map
\begin{equation}
  \Psi: \scrL(\uln)\to \prod_\ell\scrL_\ell(\uln)\simeq
  \prod_{\ell\in S(\uln, p)}\scrL_\ell(\uln). 
\end{equation}
Two
$\scrA_\uln$-lattices $\Lambda_1$ and $\Lambda_2$ in
$V$ are said to be in the \emph{same genus} if 
$\Psi([\Lambda_1])=\Psi([\Lambda_2])$, or equivalently,
$(\Lambda_1)_\ell\simeq (\Lambda_2)_\ell$ for every prime $\ell$. The
fibers of $\Psi$ partition $\scrL(\uln)$ into a disjoint union of
genera. Let $\scrG(\Lambda):=\Psi^{-1}(\Psi([\Lambda]))\subseteq
\scrL(\uln)$  be the fiber of $\Psi$ over $\Psi([\Lambda])$,
that is, the set of isomorphism classes of $\scrA_\uln$-lattices
in  the genus of $\Lambda$.  
From \cite[Proposition~1.4]{Swan-1988},
we have
\begin{equation}
  \label{eq:12}
  \abs{\scrG(\Lambda)}=h(O_\Lambda), 
\end{equation}
where $h(O_\Lambda)$ denote the class number of
$O_\Lambda$. In other words, $h(O_\Lambda)$ is  the number of locally principal right (or
equivalently, left) ideal
classes of $O_\Lambda$. 

Therefore, the computation of $o(\uln)$ can be carried out in the
following two steps:
\begin{enumerate}[label=(Step~\arabic*)]
  \item Classify the genera of $\scrA_{\uln}$-lattices in the left
        $\scrK_{\uln}$-module $V$. Equivalently, classify the
        isomorphism classes of $\scrA_{\uln, \ell}$-lattices in
        $V_\ell$ for each  $\ell\in S(\uln, p)$.
      \item Pick a lattice $\Lambda$ in each genus and write down its
        endomorphism ring $O_\Lambda$ (at least locally at each prime
        $\ell$). The number $o(\uln)$ is obtained by summing up the
        class numbers $h(O_\Lambda)$ over all genera. 
      \end{enumerate}

      \begin{rem}
      The reason that condition (\ref{eq:6}) is assumed throughout the
calculations in \cite{xue-yang-yu:sp_as2} is to  make sure that the $\Z_p$-order $\scrA_{\uln, p}$ is a product of Eichler
orders \cite[Remark~3.7]{xue-yang-yu:sp_as2}. In our 
setting, $p$ satisfies condition (C1) or (C2),  so  $\scrA_{\uln, p}$
becomes more complicated. This is precisely why the primes $p\in \{2, 3,
5\}$ are treated separately from the rest of the primes.   Luckily for
us, many $\scrA_{\uln, p}$ turn out to be Bass orders (see
Definition~\ref{defn:Bass} below), which makes the classification of
$\scrA_{\uln, p}$-lattices more  manageable.
     \end{rem}
%

      \section{The isotypic case}
      \label{sec:isotypic-case}
In this section, we calculate the values of $o(n)$ for $n \in \{3,4,5,8,12\}$
and $p | n$.  Keep the notation of previous sections. In particular,
$D=D_{p, \infty}$ is the unique quaternion $\Q$-algebra ramified
precisely at $p$ and $\infty$, and $\calO$ is a maximal $\Z$-order in
$D$.  
Since $A_n$ is the maximal order in the $n$-th cyclotomic field $K_n$, and $\calO$ has reduced
discriminant $p$, we have $S(n, p)=\{p\}$. In other words, the
$\ell$-adic completion $\scrA_{n, \ell}$ is non-maximal in
$\scrK_{n, \ell}$ if and only if $\ell=p$. It turns out that
$\scrA_{n, p}$ is always a Bass order in the quaternion $K_{n,
  p}$-algebra $\scrK_{n, p}$. Therefore, the classification of genera of the lattice set $\scrL (n)$ is then  reduced
  to the classification of lattices over local quaternion Bass orders. 
  

\subsection{Classification of lattices over local quaternion Bass orders}
\label{subsec:lattices-over-local}
The main references for this section are \cite{Brzezinski-1983, Brzezinski-crelle-1990} and
\cite[\S37]{curtis-reiner:1}.  Let $F$ be  a
nonarchimedean local field, and
$O_F$ be its ring of integers.  Fix a uniformizer $\varpi$ of $F$ and
denote the the finite residue field $O_F/\varpi O_F$ by $\grk$. Let $B$ be a
finite dimensional separable $F$-algebra \cite[Definition~7.1 and
Corollary~7.6]{curtis-reiner:1},  and $\scrO$ be an $O_F$-order (of
full rank) in
$B$.  We write  $\Ov(\scrO)$ for the finite set of \emph{overorders} of
$\scrO$, i.e.~$O_F$-orders in $B$ containing
$\scrO$.  A \emph{minimal overorder} of $\scrO$ is a minimal member of
$\Ov(\scrO)\smallsetminus \{\scrO\}$ with respect to inclusion.  

\begin{defn}\label{defn:Bass}
  An $O_F$-order $\scrO$ in $B$ is \emph{Gorenstein} if its dual lattice
  $\scrO^\vee:=\Hom_{O_F}(\scrO, O_F)$ is 
  projective as a left (or right) $\scrO$-module. It is
  called a \emph{Bass order} if every member of $\Ov(\scrO)$  is
  Gorenstein. It is called a \emph{hereditary order} if  
 every  left ideal of $\scrO$ is projective as a left $\scrO$-module. If $\scrO$ is the intersection of two maximal orders, then it is called an \emph{Eichler order}.
\end{defn}

%

We have the following inclusions of orders:
\[ \text{(maximal)}\subset \text{(herediary)} \subset \text{(Eichler)}\subset \text{(Bass)}\subset \text{(Gorenstein)}.\]
If $B$ is division, then Eichler orders are also maximal. 
Bass notes in \cite{Bass-MathZ-1963} that Gorenstein orders are ubiquitous. 

Let $I$ be a fractional left $\scrO$-ideal (of full rank) in $B$. We
say $I$ is \emph{proper  over $\scrO$} if its associated left order
$O_l(I):=\{x\in B\mid xI \subseteq I\}$ coincides with $\scrO$.  From
\cite[Example~2.6 and Corollary~2.7]{Brzezinski-loc-Princ}, the
following lemma provides an
equivalent characterization of Gorenstein orders in certain types of $F$-algebras:

\begin{lem}\label{lem:Gorenstein}
Suppose that $B$ is either a commutative algebra or a quaternion
$F$-algebra. Then   $\scrO$ is Gorenstein if and only if every proper fractional
left $\scrO$-ideal $I\subset B$ is principal (i.e.~there exists $x\in
B^\times$ such that $I=\scrO x$). 
\end{lem}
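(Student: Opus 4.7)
The plan is to reduce to a classical characterization of Gorenstein orders via invertibility of proper fractional ideals, then upgrade invertibility to principality using the complete local nature of $O_F$. This is essentially the route of the cited result of Brzezinski.

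For the forward direction, suppose $\scrO$ is Gorenstein and $I\subset B$ is a proper fractional left $\scrO$-ideal, so $O_l(I)=\scrO$. I would first invoke the equivalence (valid for commutative or quaternion $B$): Gorensteinness is the same as every proper left ideal being \emph{invertible}, meaning there is a fractional ideal $J$ with $IJ=\scrO$. In the commutative case this is one-dimensional Cohen--Macaulay/Gorenstein theory. In the quaternion case, the canonical involution on $B$ interchanges left and right structures, and the codifferent $\scrO^\vee$ viewed inside $B$ via the reduced trace pairing is a two-sided fractional $\scrO$-ideal; Gorensteinness amounts to $\scrO^\vee$ being invertible, and duality transports the conclusion to arbitrary proper left ideals. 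I would then promote invertibility to principality using that $O_F$ is a complete DVR, so $\scrO$ is a semilocal complete Noetherian ring decomposing as a product of complete local factors; an invertible (hence locally free of rank one) left ideal over such a ring is free, yielding $I=\scrO x$ for some $x\in B^\times$.

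For the converse, if every proper left ideal is principal then in particular the codifferent $\scrO^\vee$, which is proper (its left order equals $\scrO$ by non-degeneracy of the trace pairing), is principal, hence a free and therefore projective left $\scrO$-module. That is precisely the Gorenstein condition. The main obstacle is the first step above: the equivalence ``Gorenstein $\Leftrightarrow$ every proper left ideal is invertible'' fails for general separable $F$-algebras and genuinely exploits the involution available on commutative and quaternion algebras (equivalently, the symmetry of the trace form) to interchange left and right dualities; this is where the hypothesis on $B$ enters in an essential way.
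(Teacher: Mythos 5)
The paper does not actually prove this lemma; it is imported wholesale from Brzezinski (Example~2.6 and Corollary~2.7 of the cited reference), with an alternative route in the quaternion case via the Drozd--Kirichenko--Roiter condition~G4/G4$'$ together with Kaplansky's theorem on invertible quaternion ideals. So there is no internal argument to compare yours against, and the proposal must stand on its own. Your converse direction is essentially correct, and as the paper notes it in fact holds for an arbitrary separable $F$-algebra $B$. The point you should state explicitly, though, is \emph{why} $O_l(\scrO^\vee)=\scrO$: this is not a consequence of ``non-degeneracy of the trace pairing'' alone but of the \emph{cyclicity} of the reduced trace together with biduality $(\scrO^\vee)^\vee=\scrO$. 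Concretely, if $y\scrO^\vee\subseteq\scrO^\vee$ then $\Tr\bigl(z\cdot(sy)\bigr)=\Tr(y z s)\in O_F$ for all $z\in\scrO^\vee$ and $s\in\scrO$, so $sy\in(\scrO^\vee)^\vee=\scrO$ for every $s\in\scrO$, hence $y\in O_r(\scrO)=\scrO$.

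The forward direction is where your sketch has a genuine gap, and it lies at precisely the point the paper's surrounding discussion flags. Your assertion that ``$\scrO$ is a semilocal complete Noetherian ring decomposing as a product of complete local factors'' is false in the quaternion case: $\Mat_2(O_F)$ and every Eichler order $\left[\begin{smallmatrix}O_F&O_F\\ \varpi^e O_F&O_F\end{smallmatrix}\right]$ contain nontrivial idempotents and are \emph{not} products of local rings (they fall under Eichler invariant $1$ or $2$). More seriously, the parenthetical ``invertible (hence locally free of rank one)'' asserts exactly what the paper cautions does \emph{not} hold for noncommutative orders: a projective (or invertible) module over a noncommutative order need not be locally free, and over a complete local base ``locally free'' already means free, so the entire weight of the forward implication rests on this one claim. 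Converting Gorensteinness of $\scrO$ into principality of every proper left ideal for quaternion orders is the substance of Brzezinski's Corollary~2.7 (equivalently, of DKR's G4$'$ combined with Kaplansky's Theorem~1); your outline invokes this conclusion rather than deriving it, and the appeal to a product-of-local-rings decomposition would lead the argument astray if pressed. The commutative half of the forward direction, where the decomposition into complete local factors is genuine and invertible ideals are indeed locally free of rank one, is fine as you wrote it.
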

In the quaternion case, the above lemma can also be obtained by
combining \cite[Condition~G4 or G4', p.~1364]{Drozd-Kirichenko-Roiter-1967} and
\cite[Theorem~1]{Kaplansky-quat-invertible}. The lemma no longer holds in general
for orders in more complicated algebras.  If every proper fractional
left $\scrO$-ideal $I$ is principal, then $\scrO$ is
Gorenstein, but the converse is not necessarily true. 
See
\cite[p.~220]{Kaplansky-quat-invertible} and 
\cite[p.~535]{Brzezinski-loc-Princ} for some examples. Nevertheless, a
proper fractional
left ideal over a Gorenstein order  is always
left projective according to
\cite[Theorem~5.3, pp.~253--255]{Roggenkamp-Latt-II}. However, unlike the situation over commutative rings, a projective module over a non-commutative ring may not be locally free.  
Brzezinski \cite[Proposition~2.3]{Brzezinski-loc-Princ} gave a precise characterization of the orders $\scrO$ such that every proper fractional
left $\scrO$-ideal $I\subset B$ is principal (Such orders are called \emph{strongly Gorenstein} by him).

For the rest of Section~\ref{subsec:lattices-over-local}, we assume that $\fchar(F)\neq 2$ and
$B$ is a quaternion $F$-algebra.   The reduced trace and reduced
norm maps of $B$ are denoted by $\Tr: B\to F$ and $\Nr: B\to F$
respectively.  We write $\grd(\scrO)$ for the reduced discriminant of
$\scrO$, which is a nonzero integral ideal of $O_F$. 
From \cite[Proposition~1.2]{Brzezinski-1983}, $\scrO$ is
hereditary if and only if $\grd(\scrO)$ is square-free. Thus if $B$ is
division, then $\scrO$ is hereditary if and only if $\scrO$ is the
unique maximal order of $B$; if $B\simeq \Mat_2(F)$, then $\scrO$ is
hereditary if and only if $\scrO$ is isomorphic to $\Mat_2(O_F)$ or  $\left[\begin{smallmatrix}
  O_F & O_F\\ \varpi O_F & O_F
\end{smallmatrix}\right]$. 


\begin{thm}
  The following are equivalent:
  \begin{enumerate}
  \item[(a)] every left $\scrO$-ideal is generated by at most 2
    elements;
  \item[(b)] $\scrO$ is Bass;
    \item[(c)] every indecomposable $\scrO$-lattice is isomorphic to
      an ideal of $\scrO$;
   \item[(d)] $\scrO\supseteq O_L$ for some 
     semisimple quadratic $F$-subalgebra   $L\subseteq B$.
  \end{enumerate}
\end{thm}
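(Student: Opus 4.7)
The plan is to prove the equivalences $(b)\Leftrightarrow(a)\Leftrightarrow(c)$ via the general Bass/Drozd--Kirichenko--Roiter machinery, and then to treat $(b)\Leftrightarrow(d)$ separately using the specific structure of quaternion algebras. Throughout, Lemma~\ref{lem:Gorenstein} (every proper fractional left ideal over a Gorenstein quaternion order is principal) serves as the main local tool.

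For $(b)\Rightarrow(a)$: given a left $\scrO$-ideal $I$, its left order $\scrO':=O_l(I)\in\Ov(\scrO)$ is Gorenstein by hypothesis, so by Lemma~\ref{lem:Gorenstein} we have $I=\scrO'\cdot x$ for some $x\in B^\times$. A short ascending-chain argument should show that $\scrO'$ is generated over $\scrO$ by a single additional element, since each minimal overorder step adjoins one generator; this yields a 2-generator presentation of $I$ over $\scrO$. Conversely, for $(a)\Rightarrow(b)$ I would apply the hypothesis to a proper ideal $J$ of an arbitrary overorder $\scrO'\in\Ov(\scrO)$: writing $J=\scrO y_1+\scrO y_2$ and then saturating to $J=\scrO'y$ via a Nakayama-type argument gives principality over $\scrO'$, whence Gorenstein-ness by Lemma~\ref{lem:Gorenstein}.

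The equivalence $(b)\Leftrightarrow(c)$ is the classical Drozd--Kirichenko--Roiter classification of lattices over Bass orders, which I would cite from \cite[\S37]{curtis-reiner:1}: an order is Bass precisely when every indecomposable finitely generated lattice is isomorphic to an ideal of some overorder, and every such ideal is in particular an $\scrO$-ideal by restriction of scalars.

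The most substantial step is $(b)\Leftrightarrow(d)$. For $(d)\Rightarrow(b)$ I would fix an overorder $\scrO''\in\Ov(\scrO)$, so $\scrO''\supseteq O_L$, and view $\scrO''$ as a torsion-free $O_L$-module of rank $2$ in $B$; selecting a suitable decomposition $\scrO''=O_L\oplus O_L\cdot j$ and analyzing the conjugation action of $O_L$ on $j$ (either trivial or through the nontrivial $F$-automorphism of $L$) allows one to check directly that proper left $\scrO''$-ideals are principal, and Lemma~\ref{lem:Gorenstein} then yields Gorenstein-ness. The main obstacle will be $(b)\Rightarrow(d)$: my plan is to extract a separable quadratic element of $\scrO$, either by observing that in the generic case the residue ring $\scrO/\Rad(\scrO)$ must contain a separable quadratic extension of $\grk$ which Hensel-lifts to an element $x\in\scrO$, or, in the degenerate cases (e.g.\ $B$ division with $\scrO$ the unique maximal order), by noting that $\scrO$ automatically contains the ring of integers of any quadratic $F$-subfield of $B$. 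Once such $x\in\scrO$ is produced with separable quadratic minimal polynomial over $F$, the subring $O_F[x]$ is the desired $O_L$.
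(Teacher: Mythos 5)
The paper does not reprove this theorem: it cites \cite[\S37]{curtis-reiner:1} for $(a)\Rightarrow(b)\Rightarrow(c)$, Drozd--Kirichenko--Roiter for $(c)\Rightarrow(a)$, and Brzezinski \cite[Proposition~1.12]{Brzezinski-crelle-1990} for $(b)\Leftrightarrow(d)$. Your attempt to supply genuine arguments is more ambitious but contains real gaps. The clearest failure is in $(b)\Rightarrow(d)$: when the Eichler invariant $e(\scrO)=0$, one has $\scrO/\grJ(\scrO)\simeq\grk$, so the residue ring contains no separable quadratic extension of $\grk$ and there is nothing to Hensel-lift; nor is such an $\scrO$ the maximal order in a division algebra (a maximal order has $e=-1$), so it escapes your ``degenerate case'' as well. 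These $e(\scrO)=0$ orders are exactly the hard case: there $L/F$ must be a \emph{ramified} quadratic extension and Brzezinski's argument produces the embedding $O_L\hookrightarrow\scrO$ by a different route, leading to the normal form $\scrO=O_L+\grJ(\calH(\scrO))^c$ of \eqref{eq:15}. Even where a lift exists, $O_F[x]$ for a lift $x$ of a separable quadratic residue element is an order in $L$ but need not be the \emph{maximal} order $O_L$, which is what $(d)$ requires.

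Your $(a)\Leftrightarrow(b)$ sketches are also unsound as stated. In $(b)\Rightarrow(a)$, ``each minimal overorder step adjoins one generator'' gives more than two $\scrO$-module generators for $O_l(I)$ when the hereditary closure sits several steps up the chain; the fact you actually need, that every overorder of a quaternion Bass order is of the form $\scrO+\scrO z$, cannot be extracted from $(a)$ without circularity. In $(a)\Rightarrow(b)$, going from $J=\scrO y_1+\scrO y_2$ to $J=\scrO'y_1+\scrO'y_2$ is immediate, but the further claim that a ``Nakayama-type argument'' yields $J=\scrO'y$ is wrong: Nakayama identifies a minimal generating set, it does not shrink one, and a proper $2$-generated left $\scrO'$-ideal need not be principal unless $\scrO'$ is already known to be Gorenstein --- which is precisely what you are trying to prove. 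Only $(b)\Leftrightarrow(c)$, which simply cites Drozd--Kirichenko--Roiter, matches the paper.
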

Indeed, the implications $(a)\Rightarrow (b) \Rightarrow (c)$ hold in
much more general settings according to 
\cite[\S37]{curtis-reiner:1}. The implication $(c) \Rightarrow (a)$ is
proved by Drozd, Kirichenko and Roiter
\cite{Drozd-Kirichenko-Roiter-1967} (see
\cite[p.~790]{curtis-reiner:1}). Lastly, the equivalence
$(b)\Leftrightarrow (d)$ is proved by Brzezinski
\cite[Proposition~1.12]{Brzezinski-crelle-1990}. See Chari
et al.~\cite{Voight-basic-orders} for more
characterization of quaternion Bass orders. 

We recall the notion of \emph{Eichler invariant} following
\cite[Definition~1.8]{Brzezinski-1983}. 
\begin{defn}
  Let   $\grk'/\grk$ be the unique
quadratic field extension.  When $\scrO\not\simeq
\Mat_2(O_F)$, the quotient of $\scrO$ by its Jacobson radical
$\grJ(\scrO)$ falls into the following three cases: 
\[\scrO/\grJ(\scrO)\simeq \grk\times \grk, \qquad \grk,
\quad\text{or}\quad \grk', \]
and the \emph{Eichler invariant} $e(\scrO)$  is defined to be
$1, 0, -1$ accordingly.  As a convention, if $\scrO\simeq
\Mat_2(O_F)$, then its Eichler invariant is defined to be
$2$.
\end{defn}


For example, if $B$ is division and $\scrO$ is the unique  maximal order,
then $e(\scrO)=-1$.  It is shown in
\cite[Proposition~2.1]{Brzezinski-1983} that $e(\scrO)=1$ if and only if $\scrO$ is a non-maximal Eichler order. Note that $e(\scrO)=1$ can only occur  when $B\simeq \Mat_2(F)$. Moreover, if $e(\scrO)\neq 0$, then $\scrO$ is
automatically Bass by \cite[Corollary~2.4 and
Propoisition~3.1]{Brzezinski-1983}.  The classification of lattices
over Eichler orders is well known (see
\cite[p.~315]{xue-yang-yu:sp_as2} for example), which we recall as follows.

\begin{lem}
  Suppose $B\simeq \Mat_2(F)$ and let $\scrO\simeq \begin{bmatrix} O_F & O_F \\
  \varpi^e O_F & O_F\end{bmatrix}$ be an Eichler order. Let $M$ be an $\scrO$-lattice in a finite left $B$-module $W$. Then
 \begin{equation}
         \label{eq:eichler}
     W\simeq \begin{bmatrix}
            F \\ F
          \end{bmatrix}^{\oplus u}, \quad \text{and} \quad  
          M\simeq \bigoplus_{i=1}^{u}
          \begin{bmatrix}
            O_F \\ \varpi^{e_i} O_F
          \end{bmatrix},
        \end{equation}
where the $e_i$'s are integers such that  $0 \le e_i \le e$ and $e_{i}\le e_{i+1}$ for all $i$. Moreover, the isomorphism class of $M$ is uniquely determined by these $e_i$'s.  
\end{lem}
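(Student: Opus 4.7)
The plan is to use the diagonal idempotents of $\scrO$ to reduce the classification of $\scrO$-lattices in $W$ to a pair of $O_F$-lattices satisfying a chain condition, and then to apply the elementary divisor theorem. To begin, I would note that $B\simeq \Mat_2(F)$ has a unique simple left module $V_0 := \left[\begin{smallmatrix} F \\ F \end{smallmatrix}\right]$ up to isomorphism, so any finite left $B$-module is isomorphic to $V_0^{\oplus u}$ for some $u$, yielding the first part of \eqref{eq:eichler}. The matrix units $\epsilon_j := e_{jj}$ for $j=1,2$ are orthogonal idempotents of $\scrO$ with $\epsilon_1+\epsilon_2=1$, so every $\scrO$-lattice $M\subseteq W$ decomposes as $M = \epsilon_1 M\oplus \epsilon_2 M$ of $O_F$-modules; via the canonical projections I would regard $L_j := \epsilon_j M$ as $O_F$-lattices sitting inside a common $F^u$.

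Next I would translate $\scrO$-stability into a compatibility condition on the pair $(L_1,L_2)$. The off-diagonal elements $e_{12}$ and $\varpi^e e_{21}$ lie in $\scrO$ and, under the identifications above, act on $W$ as the identity $F^u\to F^u$ and as multiplication by $\varpi^e$ respectively; hence $\scrO$-stability of $M$ is equivalent to the chain
\[ \varpi^e L_1 \subseteq L_2 \subseteq L_1, \]
and any such pair conversely yields an $\scrO$-lattice. Applying the elementary divisor theorem to $(L_1,L_2)$ then furnishes an $O_F$-basis $v_1,\dots,v_u$ of $L_1$ together with integers $0\le e_1\le\cdots\le e_u$ satisfying $L_2 = \bigoplus_i \varpi^{e_i} O_F v_i$; the outer inclusion $\varpi^e L_1\subseteq L_2$ forces $e_i\le e$. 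In this adapted basis each rank-one piece $\left[\begin{smallmatrix} O_F v_i \\ \varpi^{e_i} O_F v_i \end{smallmatrix}\right]$ is manifestly stable under $\epsilon_1,\epsilon_2,e_{12}$ and $\varpi^e e_{21}$, hence under all of $\scrO$, producing the form of $M$ claimed in \eqref{eq:eichler}.

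For uniqueness, the integers $e_1,\dots,e_u$ are the invariant factors of $L_1/L_2$ and so depend only on $M$. Equivalently, since $O_F$ is a complete discrete valuation ring, the Krull--Schmidt theorem applies to $\scrO$-lattices, and the indecomposable rank-one summands $\left[\begin{smallmatrix} O_F \\ \varpi^j O_F \end{smallmatrix}\right]$ for $0\le j\le e$ are seen to be pairwise non-isomorphic by comparing valuations of any putative intertwining scalar in $\End_B(V_0)^\times = F^\times$. I expect the main care point, rather than a genuine obstacle, to be the initial identification of the $\scrO$-action on $M$ with the chain $\varpi^e L_1\subseteq L_2\subseteq L_1$; once this setup is in place, the remainder of the argument reduces to classical commutative linear algebra over the discrete valuation ring $O_F$.
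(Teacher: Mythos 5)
Your proof is correct. The paper does not actually prove this lemma; it is presented as a well-known recollection with a pointer to \cite[p.~315]{xue-yang-yu:sp_as2}, so there is no internal proof to compare against. Your route---decomposing $M$ via the orthogonal idempotents $\epsilon_1,\epsilon_2$, translating $\scrO$-stability into the chain $\varpi^e L_1\subseteq L_2\subseteq L_1$, applying the elementary divisor theorem to the pair, and handling uniqueness by invariant factors (or, equivalently, Krull--Schmidt for lattices over the complete local ring $O_F$ plus the observation that $\End_B(V_0)=F$ pins down the rank-one summands up to the exponent $e_i$)---is the standard argument and is complete. One very minor stylistic note: after you choose the adapted basis $v_1,\dots,v_u$ of $L_1$, it is worth spelling out that setting $w_i:=e_{21}v_i$ produces a new $B$-decomposition $W=\bigoplus_i(Fv_i+Fw_i)$ into copies of $V_0$ so that each summand $O_Fv_i\oplus\varpi^{e_i}O_Fw_i$ really is an $\scrO$-stable lattice in a single copy of $V_0$, and one should check $e_{12}w_i=e_{12}e_{21}v_i=\epsilon_1 v_i=v_i$; but this is exactly what your ``manifestly stable'' sentence is asserting, and the verification is immediate.
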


Henceforth we assume
that $e(\scrO)\in \{0, -1\}$. 
Let $n(\scrO)$ be the unique non-negative integer such that
$\grd(\scrO)=(\varpi^{n(\scrO)})$. Suppose that 
 $\scrO$ is Bass but non-hereditary. From \cite[Proposition~1.12]{Brzezinski-1983}, $\scrO$
has a unique minimal overorder $\calM(\scrO)$, which is also Bass by
definition. According to  \cite[Propositions~3.1 and
4.1]{Brzezinski-1983},
\begin{equation}
  \label{eq:40}
  n(\calM(\scrO))=
  \begin{cases}
    n(\scrO)-2 &\text{if } e(\scrO)=-1,\\
    n(\scrO)-1 &\text{if } e(\scrO)=0,
  \end{cases}
\end{equation}
and $e(\calM(\scrO))=e(\scrO)$ if 
 $\calM(\scrO)$ is also non-hereditary.  Thus starting from
$\calM^0(\scrO):=\scrO$, we  define $\calM^i(\scrO):=\calM(\calM^{i-1}(\scrO))$ recursively
to obtain a unique chain of Bass orders  terminating at a hereditary order
$\calM^m(\scrO)$:
\begin{equation}\label{eq:19}
 \scrO=\calM^0(\scrO)\subset \calM^1(\scrO)\subset
  \calM^2(\scrO)\subset \cdots \subset \calM^{m-1}(\scrO)\subset
  \calM^m(\scrO),   
\end{equation}
where  $m$ is given as follows
\begin{itemize}
\item $m=n(\scrO)-1$ if $e(\scrO)=0$; and 
\item $m=\fl{n(\scrO)/2}$ if $e(\scrO)=-1$, where $x\mapsto \fl{x}$ is the
  floor function on $\R$.
\end{itemize}
The order $\calM^m(\scrO)$ is called the \emph{hereditary closure} of
$\scrO$ and will henceforth be denoted by $\calH(\scrO)$. If $e(\scrO)=-1$, then
$\calH(\scrO)$ is always a maximal order by
\cite[Proposition~3.1]{Brzezinski-1983}. 
Thus when $e(\scrO)=-1$,
$n(\scrO)$ is even if $B\simeq \Mat_2(F)$, and $n(\scrO)$ is odd if
$B$  is division. If $e(\scrO)=0$, then
\begin{itemize}
\item $\calH(\scrO)\simeq \left[\begin{smallmatrix}
  O_F & O_F\\ \varpi O_F & O_F
\end{smallmatrix}\right]$  if $B\simeq \Mat_2(F)$, and 
\item $\calH(\scrO)$ is the unique maximal order if $B$ is division. 
\end{itemize}
Note that $\scrO$ is hereditary (i.e.~$m=0$) if and only if $e(\scrO)=-1$ and
$B$ is division, so $m$ is strictly positive in the remaining cases. 

From \cite[Proposition~1.12]{Brzezinski-crelle-1990}, there
exists a quadratic field extension
$L/F$  such that $O_L$ embeds\footnote{From the proof of \cite[Theorem~3.3 and
  3.10]{Brzezinski-crelle-1990}, any two embeddings of 
  $O_L$ into $\scrO$ are conjugate by an element of the
  normalizer of $\scrO$, thus expression (\ref{eq:15}) does not
  depend on the choice of the embedding $O_L\hookrightarrow \scrO$.} into $\scrO$, and 
\begin{align}
  \scrO&=O_L+\grJ(\calH(\scrO))^c,  \qquad \text{where} \label{eq:15}\\
  \label{eq:14}
  c&=
  \begin{cases}
    n(\scrO)/2 &\text{if } e(\scrO)=-1 \text{ and } B\simeq \Mat_2(F),\\
    n(\scrO)-1&\text{otherwise.}
  \end{cases}
\end{align}
In fact,  $L/F$ is the unique unramified quadratic field extension if
$e(\scrO)=-1$, and it is a ramified quadratic field extension if
$e(\scrO)=0$. In the latter case, the ramified quadratic extension $L/F$ can be arbitrary
if $n(\scrO)=2$
according to \cite[(3.14)]{Brzezinski-crelle-1990}; and it is uniquely
determined by $\scrO$ if $n(\scrO)\geq 3$ and $F$ is nondyadic
according to \cite[Lemma~3.5]{peng-xue:select}. 

\begin{lem}\label{lem:indecomposable}
Suppose that $\scrO\subset B$ is a Bass order with $e(\scrO)\in \{0,
-1\}$. 
Let $N$ be an indecomposable left $\scrO$-lattice. 
\begin{enumerate}
\item If $B$ is division, then
  \begin{equation}
    \label{eq:16}
    N\simeq \calM^i(\scrO) \qquad \text{for some } 0\leq i\leq m. 
  \end{equation}
\item Suppose that $B$  is split, i.e.~$B\simeq \Mat_2(F)$. Fix an identification of 
   $\calH(\scrO)$ with $\left[\begin{smallmatrix}
  O_F & O_F\\ \varpi O_F & O_F
\end{smallmatrix}\right]$ (resp.~$\Mat_2(O_F)$) if $e(\scrO)=0$ (resp.~$-1$).  

  \begin{enumerate}
  \item   If $e(\scrO)=0$, then $N$ is isomorphic to one of the
    following $\scrO$-lattices: 
    \begin{equation}
\label{eq:22}
\begin{bmatrix}
        O_F \\ \varpi O_F
      \end{bmatrix}, \quad        \begin{bmatrix}
        O_F \\ O_F 
      \end{bmatrix}, \quad  \text{or}\quad \calM^i(\scrO)\quad
    \text{with}\quad 0\leq i
    \leq m-1.
  \end{equation}
    \item If $e(\scrO)=-1$, then $N$ is isomorphic to one of the
    following $\scrO$-lattices: 
    \begin{equation}
      \label{eq:17}
      \begin{bmatrix}
        O_F \\ O_F 
      \end{bmatrix} \quad \text{or}\quad  \calM^i(\scrO)\quad
    \text{with}\quad 0\leq i
    \leq m-1.
    \end{equation}
  \end{enumerate}
\end{enumerate}
\end{lem}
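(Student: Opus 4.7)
The plan is to combine the characterization of Bass orders (specifically the implication (b)$\Rightarrow$(c) of the preceding theorem, which asserts that every indecomposable $\scrO$-lattice is isomorphic to a left ideal of $\scrO$) with Lemma~\ref{lem:Gorenstein}, thereby reducing the classification to an analysis of the overorders of $\scrO$. Since $\scrO$ is Bass, every overorder of $\scrO$ is again Bass, hence Gorenstein.

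First I would enumerate the overorders. The uniqueness of the minimal overorder, applied recursively, forces every strict overorder of $\scrO$ to contain $\calH(\scrO)=\calM^m(\scrO)$. Thus the overorders consist of $\calM^0(\scrO),\ldots,\calM^m(\scrO)$ together with, only in the split $e(\scrO)=0$ case, the two maximal orders of $\Mat_2(F)$ sitting above $\calH(\scrO)$; in all other cases $\calH(\scrO)$ is already maximal. Given a left $\scrO$-ideal $I$ with left order $\scrO':=O_l(I)$, if $I$ has full rank in $B$ then Lemma~\ref{lem:Gorenstein} applied to the Gorenstein order $\scrO'$ yields $I\simeq\scrO'$ as left $\scrO$-modules. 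Non-full-rank ideals occur only in the split case, where $F\cdot I$ is a simple summand of $B$ isomorphic to $F^2$; in this situation $\scrO'$ must contain a nontrivial idempotent (hence is a hereditary overorder of $\calH(\scrO)$), and $I$ is identified via the preceding lattice classification over hereditary orders with one of the columns of $\scrO'$.

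Next I would identify the indecomposable summands of each overorder as a left $\scrO$-module. For $0\le i<m$ the order $\calM^i(\scrO)$ inherits the Eichler invariant $e(\scrO)\in\{0,-1\}$, so $\calM^i(\scrO)/\grJ(\calM^i(\scrO))$ is a field; this makes $\calM^i(\scrO)$ a local ring with no nontrivial idempotents, hence indecomposable as a left $\scrO$-module. For $\calM^m(\scrO)=\calH(\scrO)$: in the division case this is likewise a local ring (the unique maximal order of a local division algebra), so we obtain the full chain of part~(1). In the split $e(\scrO)=-1$ case $\calH(\scrO)\simeq\Mat_2(O_F)$ decomposes as two copies of the column $\left[\begin{smallmatrix}O_F\\O_F\end{smallmatrix}\right]$. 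In the split $e(\scrO)=0$ case $\calH(\scrO)$ decomposes as $\left[\begin{smallmatrix}O_F\\O_F\end{smallmatrix}\right]\oplus\left[\begin{smallmatrix}O_F\\\varpi O_F\end{smallmatrix}\right]$, and a direct computation shows the two maximal orders above $\calH(\scrO)$ decompose as left $\scrO$-modules into copies of these same two columns, introducing no new summands.

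Finally I would verify that the listed modules are pairwise non-isomorphic. The $O_F$-rank separates the $\calM^i(\scrO)$ (rank~$4$) from the column lattices (rank~$2$); the reduced discriminants distinguish the various $\calM^i(\scrO)$ via~\eqref{eq:40}; and in the split $e(\scrO)=0$ case the two columns are non-isomorphic as $\scrO$-modules because $\End_\scrO(F^2)=F$ (since $F\cdot\scrO=\Mat_2(F)$) and no scalar in $F^\times$ carries $\left[\begin{smallmatrix}O_F\\O_F\end{smallmatrix}\right]$ to $\left[\begin{smallmatrix}O_F\\\varpi O_F\end{smallmatrix}\right]$. The main obstacle is handling the non-full-rank ideals in the split case and checking that the overorders of $\calH(\scrO)$ in the split $e(\scrO)=0$ case do not produce any indecomposable summand beyond the two columns already on the list.
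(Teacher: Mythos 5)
Your argument is correct and proceeds along essentially the same lines as the paper's: reduce via Drozd--Kirichenko--Roiter theory to ideals, invoke Lemma~\ref{lem:Gorenstein} to identify full-rank proper ideals over the chain of Gorenstein overorders with the $\calM^i(\scrO)$ themselves, and treat the non-full-rank column lattices separately. The one genuinely different sub-argument is your proof that $\calM^i(\scrO)$ is indecomposable for $i<m$: you note that $\End_{\scrO}(\calM^i(\scrO))\cong\calM^i(\scrO)^{\mathrm{opp}}$ is local since $e(\calM^i(\scrO))\in\{0,-1\}$ makes the residue ring a field, whereas the paper argues by contradiction that a decomposition $\calM^i(\scrO)=N_1\oplus N_2$ would force $O_l(\calM^i(\scrO))=O_l(N_1)\cap O_l(N_2)$ to be an intersection of maximal orders and hence have Eichler invariant in $\{1,2\}$. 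Both are valid; yours is a little more direct. Two points worth tightening: the sentence ``every strict overorder of $\scrO$ contains $\calH(\scrO)$'' is false as written ($\calM^1(\scrO)$ is a counterexample when $m>1$) --- what you actually use, and state correctly in the next sentence, is that every overorder of $\scrO$ is some $\calM^i(\scrO)$ or an overorder of $\calH(\scrO)$. And ``contains a nontrivial idempotent, hence is a hereditary overorder of $\calH(\scrO)$'' is true in your setting but relies implicitly on that unique-chain structure (an idempotent rules out Eichler invariant $0$ or $-1$, hence rules out $\calM^i(\scrO)$ with $i<m$); the paper's cleaner route is to note directly that for a non-full-rank ideal $I\simeq O_F^2$ the left order $O_l(I)=\End_{O_F}(I)\simeq\Mat_2(O_F)$ is maximal outright, so it automatically contains $\calH(\scrO)$.
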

\begin{proof}
According to  the Drozd-Krichenko-Roiter Theorem
  \cite[Theorem~37.16]{curtis-reiner:1},
  \begin{equation}
    \label{eq:18}
    N\otimes_{O_F}F\simeq
    \begin{cases}
      B &\text{if $B$ is division},\\
      F^2 \text{ or } \Mat_2(F) &\text{if } B\simeq \Mat_2(F).
    \end{cases}
  \end{equation}

  First, suppose that $B\simeq \Mat_2(F)$ and $N\otimes_{O_F}F\simeq
F^2$. Then $N\simeq O_F^2$ as an $O_F$-module, and $\End_{O_F}(N)$ is
a maximal order in $B$ containing $\scrO$. It follows from
(\ref{eq:19}) that $\End_{O_F}(N)$ contains $\calH(\scrO)$, which 
equips  $N$ with a canonical  $\calH(\scrO)$-module
structure. Therefore, if $e(\scrO)=-1$, then 
$\calH(\scrO)=\Mat_2(O_F)$, and hence $N$ is homothetic to $\left[
  \begin{smallmatrix}
    O_F \\  O_F
  \end{smallmatrix}\right]$. Similarly, if
$e(\scrO)=0$, then $\calH(\scrO)=\left[\begin{smallmatrix}
  O_F & O_F\\ \varpi O_F & O_F
\end{smallmatrix}\right]$, and hence $N$ is homothetic 
to $\left[
  \begin{smallmatrix}
    O_F \\  O_F
  \end{smallmatrix}\right]$ or $\left[
  \begin{smallmatrix}
    O_F \\ \varpi O_F
  \end{smallmatrix}\right]$. 

Next, suppose that $N\otimes_{O_F}F\simeq B$.  Then we regard $N$ as a
fractional left ideal of $\scrO$. Let
$O_l(N)=\{x\in B\mid xN\subseteq N\}$ be the associated left order of
$N$. Clearly, $O_l(N)$ contains $\scrO$, so $O_l(N)=\calM^i(\scrO)$ for some $0\leq i\leq m$. In particular, $O_l(N)$ is
Gorenstein. It follows from Lemma~\ref{lem:Gorenstein} that $N\simeq
O_l(N)$ as $\scrO$-lattices. 

Clearly, if $B$ is division, then $\calM^i(\scrO)$ is
indecomposable for every $0\leq i \leq m$. On the other hand, 
if $B\simeq \Mat_2(F)$, then the hereditary closure
$\calH(\scrO)=\calM^m(\scrO)$ is
\emph{decomposable} as an $\scrO$-lattice. Thus $N\not\simeq
\calM^m(\scrO)$ in this case. It remains to show that
$\calM^i(\scrO)$ is indecomposable for the remaining  $i$'s. Suppose
otherwise so that $\calM^i(\scrO)=N_1\oplus N_2$, where each $N_j$ is an
$\scrO$-lattice in $N_j\otimes_{O_F}F\simeq F^2$.  Then
\[\calM^i(\scrO)=O_l(\calM^i(\scrO))=O_l(N_1\oplus N_2)=O_l(N_1)\cap O_l(N_2). \]
Since $O_l(N_i)$ is a maximal order in $\Mat_2(F)$ for each $i$, this would
imply that $\calM^i(\scrO))$ is an Eichler order
(i.e.~$e(\calM^i(\scrO))\in \{1, 2\}$), 
contradicting to the fact
that $e(\calM^i(\scrO))=e(\scrO)\in \{0, -1\}$ for $0\leq i\leq m-1$. 
  \end{proof}

  Applying the Krull-Schmidt-Azumaya
  Theorem~\cite[Theorem~6.12]{curtis-reiner:1}, we immediately obtain the
  following proposition. 
  \begin{prop}\label{cor:lattices-over-Bass}
    Suppose that $\scrO\subset B$ is a Bass order with $e(\scrO)\in \{0,-1\}$. 
    Let $M$ be an $\scrO$-lattice in a finite left
     $B$-module $W$. 


    \begin{enumerate}
    \item If $B$ is division, then
      $M\simeq \bigoplus_{i=0}^m \calM^i(\scrO)^{\oplus t_i}$ with
      $(t_0, \cdots, t_m)\in \Z_{\geq 0}^{m+1}$ and  $\sum_{i=0}^m t_i=\dim_BW$.
    \item If $B\simeq \Mat_2(F)$, then $W\simeq \Mat_{2, u}(F)$ for
      some  $u\geq 0$. There are two cases to consider: 
      \begin{enumerate}
      \item[(2a)] if $e(\scrO)=0$, then
        \begin{equation}
         \label{eq:23}
          M\simeq
          \begin{bmatrix}
            O_F \\ \varpi O_F
          \end{bmatrix}^{\oplus r}\bigoplus           \begin{bmatrix}
            O_F \\ O_F
          \end{bmatrix}^{\oplus s}\bigoplus
          \bigoplus_{i=0}^{m-1}\calM^i(\scrO)^{\oplus t_i} 
        \end{equation}
        with $(r, s, t_0, \cdots, t_{m-1})\in \Z_{\geq 0}^{m+2}$ and
        $r+s+2\sum_{i=0}^{m-1}t_i=u$; 
        \item[(2b)] if $e(\scrO)=-1$, then
        \begin{equation}
          \label{eq:20}
          M\simeq
                    \begin{bmatrix}
            O_F \\ O_F
          \end{bmatrix}^{\oplus s}\bigoplus
          \bigoplus_{i=0}^{m-1}\calM^i(\scrO)^{\oplus t_i}, 
        \end{equation}
        with $(s, t_0, \cdots, t_{m-1})\in \Z_{\geq 0}^{m+1}$ and
        $s+2\sum_{i=0}^{m-1}t_i=u$. 
      \end{enumerate}
    \end{enumerate}
In all cases, the isomorphism class of $M$ is uniquely determined by
the numerical invariants $r, s$ (if applicable) and the $t_i$'s. 
  \end{prop}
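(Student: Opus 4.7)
The proof is essentially an assembly of Lemma~\ref{lem:indecomposable} with the Krull-Schmidt-Azumaya Theorem \cite[Theorem~6.12]{curtis-reiner:1}, as flagged in the statement. My plan is to carry out this assembly in three short steps.

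First, I would verify that Krull-Schmidt-Azumaya applies to the category of $\scrO$-lattices. Since $F$ is a nonarchimedean local field, $O_F$ is a complete discrete valuation ring, so $\scrO$ is a complete semilocal $O_F$-algebra of finite rank. For any $\scrO$-lattice $M$, the endomorphism ring $\End_{\scrO}(M)$ is an $O_F$-order in a finite dimensional $F$-algebra; in particular it is complete in its Jacobson radical with residually semisimple quotient. Hence the endomorphism ring of an \emph{indecomposable} $\scrO$-lattice is a (noncommutative) local ring, and Krull-Schmidt-Azumaya yields a decomposition $M=\bigoplus_{j} N_j$ into indecomposable $\scrO$-sublattices, unique up to permutation and isomorphism of the summands.

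Second, I would invoke Lemma~\ref{lem:indecomposable} to enumerate the possible isomorphism types of each $N_j$. In the division case, every $N_j$ is isomorphic to some $\calM^i(\scrO)$ with $0\leq i\leq m$, which gives part (1) at once. In the split case $B\simeq\Mat_2(F)$, Lemma~\ref{lem:indecomposable} shows that the indecomposables are exhausted by $\calM^i(\scrO)$ for $0\leq i\leq m-1$ together with the column lattice $\left[\begin{smallmatrix} O_F \\ O_F\end{smallmatrix}\right]$, supplemented by $\left[\begin{smallmatrix} O_F \\ \varpi O_F\end{smallmatrix}\right]$ when $e(\scrO)=0$; the top order $\calM^m(\scrO)=\calH(\scrO)$ does not enter the list because in the split case it visibly splits into two column sublattices, as recorded in the proof of Lemma~\ref{lem:indecomposable}. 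Grouping isomorphic summands then produces the forms~\eqref{eq:23} and~\eqref{eq:20}, and the uniqueness of the multiplicities $(r,s,t_0,\ldots)$ is inherited from Krull-Schmidt-Azumaya.

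Third, the numerical constraints follow by extending scalars to $F$ and comparing with $W$. Since $\calM^i(\scrO)\otimes_{O_F}F\simeq B$ for every $i$, in case (1) each indecomposable contributes $1$ to $\dim_B W$, yielding $\sum t_i=\dim_B W$. In case (2), writing $B\simeq\Mat_2(F)\simeq (F^2)^{\oplus 2}$ as a left $B$-module shows that each $\calM^i(\scrO)$ contributes $2$ to the multiplicity $u$ of the simple module $F^2$ in $W\simeq \Mat_{2,u}(F)$, while each column lattice contributes $1$, producing $r+s+2\sum t_i=u$ in case (2a) and $s+2\sum t_i=u$ in case (2b). Since all the substantive content sits in Lemma~\ref{lem:indecomposable}, there is no genuine obstacle in Proposition~\ref{cor:lattices-over-Bass}; the only point requiring a moment's care is justifying the exclusion of $\calM^m(\scrO)$ from the split-case indecomposables, and this is already handled in the proof of that lemma.
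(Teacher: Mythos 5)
Your proof is correct and follows exactly the route the paper takes: the authors state that Proposition~\ref{cor:lattices-over-Bass} follows immediately from Lemma~\ref{lem:indecomposable} by applying the Krull-Schmidt-Azumaya Theorem, and you have simply written out the details (applicability of KSA over the complete DVR $O_F$, enumeration of indecomposables from the lemma, and the rank count over $F$) that the paper leaves implicit.
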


\subsection{Explicit computations}
Recall that our goal is to compute the value of $o(n)$ for $n \in \{3,4,5,8,12\}$
and $p | n$.  As explained in Section~\ref{sec:gen-strategy}, $o(n)$
coincides with the number of isomorphism classes of $\scrA_n$-lattices in
the left $\scrK_n$-module $V=D^2$ (See \eqref{eq:5} and \eqref{eq:7} for the definition of $\scrA_n$ and $\scrK_n$). 
From
\cite[Theorem~11.1]{Washington-cyclotomic}, the $n$-th cyclotomic
field $K_n$ has class number $1$ for each $n\in \{3,4,5,8,12\}$.

Since $K_n$ is totally imaginary and $p$ does not split
completely in $K_n$,  we have  $\scrK_n=K_n\otimes_\Q
D=\Mat_2(K_n)$.  Thus as a left $\Mat_2(K_n)$-module, 
\begin{equation}\label{eq:8}
  V\simeq 
  \begin{cases}
    \Mat_2(K_n) &\text{if } n\in \{3, 4\},\\
    K_n^2&\text{if } n\in \{5, 8, 12\}. 
  \end{cases}
\end{equation}
From this, we can easily write down its endomorphism algebra
\begin{equation}
  \scrE_n=\End_{\scrK_n}(V)=  \begin{cases}
    \Mat_2(K_n) &\text{if } n\in \{3, 4\},\\
    K_n&\text{if } n\in \{5, 8, 12\}.
  \end{cases}
\end{equation}
In particular, we see that every arithmetic subgroup of $\scrE_n^\times$ is infinite, and hence the abelian surfaces in these isogeny classes have infinite automorphism groups.   

Given  an $\scrA_n$-lattice  $\Lambda\subset V$, its endomorphism ring
$O_\Lambda=\End_{\scrA_n}(\Lambda)$ is an  $A_n$-order in
$\scrE_n$. Therefore, if $n\in \{5, 8, 12\}$, then
$O_\Lambda=A_n$ and $h(O_\Lambda)=1$. Now suppose that $n\in \{3,4\}$. We are going to show in \eqref{eq:25} that $\det(O_{\Lambda,
  p}^\times)=A_{n, p}^\times$.  On the other hand, at each prime
$\ell\neq p$, we have $O_{\Lambda, \ell}\simeq
\Mat_2(A_{n, \ell})$ since $O_{\Lambda, \ell}$ is maximal. Thus if we
write $\wh O_\Lambda$ (resp.~$\wh A_n$) for the profinite completion of
$O_\Lambda$ (resp.~$A_n$),
then $\det(\wh O_\Lambda^\times)=\wh{A}_n^\times$. The same proof of
\cite[Corollaire~III.5.7(1)]{vigneras} shows that
$h(O_\Lambda)=h(A_n)=1$. In conclusion, for every pair $(n, p)$ with
$n\in \{3, 4, 5, 8, 12\}$ and $p|n$, we have 
\begin{equation}
  \label{eq:13}
  o(n)=\abs{\scrL_p(n)},  
\end{equation}
which is consistent with \cite[(4.3)]{xue-yang-yu:sp_as2}.

\begin{lem}\label{lem:simple-module-ram}
  Suppose that $n\in \{3, 4, 5, 8\}$ and $p|n$. Let $U_p:=K_{n,p}^2$ be the
  unique simple $\scrK_{n, p}$-module. Then up to isomorphism, 
 there is a unique $\scrA_{n, p}$-lattice  in $U_p$. 
\end{lem}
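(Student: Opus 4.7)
The plan is to identify $\scrA_{n,p}$ as a non-maximal quaternion Bass order of Eichler invariant $-1$ in $\scrK_{n,p}\cong\Mat_2(K_{n,p})$, and then apply Proposition~\ref{cor:lattices-over-Bass}(2b) to the simple module $U_p=K_{n,p}^2$ (corresponding to $u=1$ in the notation of that proposition).

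The first step is to observe that in each of the four cases $p$ is totally ramified in $K_n$: indeed,
\[
\Phi_3\equiv(T-1)^2\!\pmod{3},\quad \Phi_4\equiv(T+1)^2\!\pmod{2},\quad \Phi_5\equiv(T-1)^4\!\pmod{5},\quad \Phi_8\equiv(T+1)^4\!\pmod{2}.
\]
Hence $K_{n,p}/\Q_p$ is totally ramified of (even) degree $\varphi(n)\in\{2,4\}$. In particular $D_p$ splits over $K_{n,p}$, so $\scrK_{n,p}\cong\Mat_2(K_{n,p})$ and $U_p=K_{n,p}^2$ is its unique simple left module.

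Next, since the maximal order $\calO_p\subset D_p$ contains $\Z_{p^2}$ (the ring of integers of the unramified quadratic extension of $\Q_p$), and since total ramification of $K_{n,p}/\Q_p$ makes $K_{n,p}\otimes_{\Q_p}\Q_{p^2}$ the unique unramified quadratic extension $L$ of $K_{n,p}$, one obtains
\[
\scrA_{n,p}=A_{n,p}\otimes_{\Z_p}\calO_p\ \supseteq\ A_{n,p}\otimes_{\Z_p}\Z_{p^2}=O_L.
\]
By condition (d) of the Bass-characterization theorem in Section~\ref{subsec:lattices-over-local}, $\scrA_{n,p}$ is Bass. The reduced discriminant $\grd(\scrA_{n,p})=pA_{n,p}$ is a proper ideal, so $\scrA_{n,p}$ is non-maximal; moreover, the image of $O_L$ in $\scrA_{n,p}/\grJ(\scrA_{n,p})$ is isomorphic to $\F_{p^2}$, which excludes the semisimple quotients $\grk\times\grk$ and $\grk$ and hence forces $e(\scrA_{n,p})=-1$.

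Finally, Proposition~\ref{cor:lattices-over-Bass}(2b) applied to $U_p\cong\Mat_{2,1}(K_{n,p})$ (so $u=1$) says that every $\scrA_{n,p}$-lattice therein decomposes as in \eqref{eq:20} with $s+2\sum_{i=0}^{m-1}t_i=1$; the only solution is $s=1$, $t_i=0$ for all $i$, giving $M\cong A_{n,p}^2$, which proves the lemma. The main obstacle is the Bass/Eichler-invariant identification, which ultimately reduces to the clean fact that the unramified splitting field $\Q_{p^2}$ of $D_p$ remains unramified after base change to the totally ramified local field $K_{n,p}$.
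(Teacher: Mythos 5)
Your proof is correct and follows essentially the same route as the paper: identify $\scrA_{n,p}$ as a quaternion Bass order with Eichler invariant $-1$, then read off the uniqueness of the lattice class from Proposition~\ref{cor:lattices-over-Bass}(2b) with $u=1$. The only difference is that where the paper simply cites external references (\cite[\S2.4]{li-xue-yu:unit-gp} for $e(\scrA_{n,p})=e(\calO_p)=-1$ and \cite[Proposition~3.1]{Brzezinski-1983} for the Bass property), you re-derive both from scratch via the embedding $O_L\hookrightarrow\scrA_{n,p}$ of the unramified quadratic extension $O_L=A_{n,p}\otimes_{\Z_p}\Z_{p^2}$ and condition~(d) of the Bass characterization theorem; this makes the argument more self-contained but is mathematically the same mechanism.
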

\begin{proof}
For each pair $(n, p)$ under consideration, $p$ is totally ramified in
$K_n$. It follows from \cite[\S2.4]{li-xue-yu:unit-gp} that the
Eichler invariants 
\begin{equation}\label{eq:48}
 e(\scrA_{n,p})=e(\calO_p)=-1. 
\end{equation}
  From
\cite[Proposition~3.1]{Brzezinski-1983}, $\scrA_{n,p}$ is a Bass
order.  Thus the lemma is a direct application of 
Corollary~\ref{cor:lattices-over-Bass}. 
\end{proof}

\begin{lem}
Suppose that $n\in \{3, 4, 5, 8\}$ and $p|n$. Then
  \begin{equation}
    \label{eq:21}
    o(n)=
    \begin{cases}
      2 &\text{if } n\in \{3, 4\},\\
      1&\text{if } n\in \{5, 8\}.\\
    \end{cases}
  \end{equation}
\end{lem}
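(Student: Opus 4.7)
The plan is to combine equation \eqref{eq:13} with the classification of lattices over local quaternion Bass orders from Proposition~\ref{cor:lattices-over-Bass} and Lemma~\ref{lem:simple-module-ram}. By \eqref{eq:13}, $o(n)$ equals the number of isomorphism classes of $\scrA_{n,p}$-lattices in $V_p$. From \eqref{eq:8}, the $\scrK_{n,p}$-module $V_p$ is the simple module $U_p$ for $n\in\{5,8\}$, and is $U_p^{\oplus 2}$ (since $V_p\simeq \Mat_2(K_{n,p})$) for $n\in\{3,4\}$. In the simple case, Lemma~\ref{lem:simple-module-ram} yields $o(n)=1$ directly.

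For $n\in\{3,4\}$, the algebra $\scrK_{n,p}\simeq \Mat_2(K_{n,p})$ is split, and $\scrA_{n,p}$ is a Bass order with $e(\scrA_{n,p})=-1$ by \eqref{eq:48}. Applying Proposition~\ref{cor:lattices-over-Bass}, case (2b), with $u=2$, the solutions of $s+2\sum_{i=0}^{m-1}t_i=2$ yield exactly $m+1$ isomorphism classes of lattices: one of type $U_p^{\oplus 2}$ corresponding to $(s,t_\bullet)=(2,0,\dots,0)$, and $m$ of type $\calM^i(\scrA_{n,p})$ corresponding to $s=0$ with $t_i=1$ for a unique index $i\in\{0,\dots,m-1\}$. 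Thus it suffices to show $m=1$, which in the $e=-1$ split case is equivalent to $n(\scrA_{n,p})=2$ by the formula $m=\lfloor n(\scrO)/2\rfloor$ stated after \eqref{eq:19}.

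The crux is therefore the reduced-discriminant computation for $\scrA_{n,p}=O_{L_n}\otimes_{\Z_p}\calO_p$, viewed as an $O_{L_n}$-order in $\Mat_2(L_n)$, where $L_n:=K_{n,p}$ is a totally ramified quadratic extension of $\Q_p$ with uniformizer $\pi_n$. Since the reduced trace on $L_n\otimes_{\Q_p}D_p\simeq \Mat_2(L_n)$ is the $L_n$-linear extension of the reduced trace on $D_p$, any $\Z_p$-basis of $\calO_p$ is also an $O_{L_n}$-basis of $\scrA_{n,p}$ with the same Gram matrix of reduced traces; hence base change commutes with taking reduced discriminants, giving $\grd(\scrA_{n,p})=\grd(\calO_p)\cdot O_{L_n}=(p)\,O_{L_n}=(\pi_n^2)$. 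This forces $n(\scrA_{n,p})=2$, whence $m=1$ and $o(n)=2$ for $n\in\{3,4\}$. The main obstacle I anticipate is purely book-keeping: one must keep track of the fact that $\calO_p$ lies in the division algebra $D_p$ while $\scrA_{n,p}$ lies in the split algebra $\Mat_2(L_n)$, so that no discrepancy is introduced in the normalisation of the reduced trace under the base change $\Z_p\to O_{L_n}$.
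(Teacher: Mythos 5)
Your argument reproduces the paper's approach for the core counting problem: for $n\in\{5,8\}$ the module $V_p$ is simple and Lemma~\ref{lem:simple-module-ram} yields a single genus; for $n\in\{3,4\}$ you identify $\scrA_{n,p}$ as a Bass order with $e(\scrA_{n,p})=-1$ in the split algebra $\Mat_2(K_{n,p})$, compute $\grd(\scrA_{n,p})=(\varpi_n^2)$ to force $m=1$, and count the $m+1=2$ isomorphism classes of $\scrA_{n,p}$-lattices in $V_p\simeq U_p^{\oplus 2}$ via Proposition~\ref{cor:lattices-over-Bass}(2b). This is exactly the paper's computation (the paper phrases the two classes as $\Lambda_p\simeq\scrA_{n,p}$ or $\Lambda_p\simeq\Mat_2(A_{n,p})$, which matches your solutions $(s,t_0)=(2,0)$ and $(0,1)$), and your remark on why the reduced discriminant is compatible with the base change $\Z_p\to A_{n,p}$ is a welcome clarification.

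There is, however, a gap in passing from the local count to $o(n)$. You invoke \eqref{eq:13}, $o(n)=\abs{\scrL_p(n)}$, as a black box, but in the paper the paragraph that records \eqref{eq:13} contains a \emph{forward} reference: for $n\in\{3,4\}$ it says ``We are going to show in \eqref{eq:25} that $\det(O_{\Lambda,p}^\times)=A_{n,p}^\times$,'' and that claim --- together with maximality of $O_{\Lambda,\ell}$ at $\ell\neq p$ --- is what gives $\det(\wh O_\Lambda^\times)=\wh A_n^\times$ and hence $h(O_\Lambda)=h(A_n)=1$. Without that step you have shown there are two genera of $\scrA_n$-lattices, but not that each genus has a single isomorphism class, so you have only established $o(n)\geq 2$, not equality. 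The paper closes this loop inside the lemma's own proof: it writes down $O_{\Lambda,p}\in\{\scrA_{n,p},\Mat_2(A_{n,p})\}$ and then verifies \eqref{eq:25}; the nontrivial case uses that $\scrA_{n,p}$ contains $O_L$ for $L$ the unramified quadratic extension of $K_{n,p}$ (by \eqref{eq:15} with $e(\scrO)=-1$), so its reduced norms already surject onto $A_{n,p}^\times$. You should include this verification before appealing to \eqref{eq:13} for $n\in\{3,4\}$.
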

\begin{proof}

First, suppose that $n\in \{5, 8\}$. Then $V_p\simeq K_{n,p}^2$ is a
simple $\Mat_2(K_{n, p})$-module.  From
Lemma~\ref{lem:simple-module-ram}, $\abs{\scrL_p(n)}=1$, and hence
$o(n)=1$ by  (\ref{eq:13}).

%

Next, suppose that $n\in \{3, 4\}$. We have already seen in the proof
of Lemma~\ref{lem:simple-module-ram} that $\scrA_{n, p}$ is a Bass
order with Eichler invariant $-1$.   Let $\varpi_n=1-\zeta_n$ be the
uniformizer of the local field $K_{n, p}$. The reduced discriminant
$\grd(\scrA_{n, p})$ is given by 
\begin{equation}
  \label{eq:28}
 \grd(\scrA_{n, p})=\grd(A_{n, p}\otimes_{\Z_p}\calO_p)=\grd(\calO_p)A_{n,p}=pA_{n,p}=\varpi_n^2A_{n,p}.   
\end{equation}
Thus the chain of Bass orders in (\ref{eq:19}) reduces to $\scrA_{n,
  p}\subset \calM(\scrA_{n,
  p})$, where $\calM(\scrA_{n,
  p})=\Mat_2(A_{n,p})$ under a suitable identification $\scrK_{n,
  p}=\Mat_2(K_{n, p})$. In this case, $V_p$ is a free
$\Mat_2(K_{n,p})$-module of rank $1$. From (\ref{eq:20}), every
$\scrA_{n, p}$-lattice $\Lambda_p$ in $V_p$ is isomorphic to
either $\scrA_{n, p}$ or $\Mat_2(A_{n,p})$.  Correspondingly, the
endomorphism ring $O_{\Lambda, p}$ is given by 
\begin{equation}
  \label{eq:24}
  O_{\Lambda,p}=\End_{\scrA_{n, p}}(\Lambda_p)\simeq
  \begin{cases}
    \scrA_{n, p} &\text{if } \Lambda_p\simeq \scrA_{n, p},\\
    \Mat_2(A_{n, p}) &\text{if } \Lambda_p\simeq \Mat_2(A_{n, p}).\\
  \end{cases}
\end{equation}
In both cases, we have
\begin{equation}
  \label{eq:25}
\det(O_{\Lambda,
  p}^\times)=A_{n, p}^\times.   
\end{equation}
Indeed, this is clear if $O_{\Lambda,p}\simeq \Mat_2(A_{n, p})$. In
the case $O_{\Lambda,p}\simeq \scrA_{n, p}$, let $L$ be the unique
unramified  quadratic field extension of $K_{n, p}$. From (\ref{eq:15}),
$O_{\Lambda,p}$ contains a copy of $O_L$, which implies that $\det(O_{\Lambda,
  p}^\times)\supseteq \Nm_{L/K_{n,p}}(O_L^\times)=A_{n,
  p}^\times$. On the other hand, $\det(O_{\Lambda,
  p}^\times)$ is obviously contained in $A_{n, p}^\times$, so  equality (\ref{eq:25}) holds in this case as
well. We conclude that $o(n)=\abs{\scrL_p(n)}=2$ if $n\in \{3, 4\}$
and $p|n$. 
\end{proof}

\begin{lem}
  $o(12)=3$ if $p\in \{2,3\}$. 
\end{lem}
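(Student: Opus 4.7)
The plan is to reduce the calculation of $o(12)$ to a purely local lattice count at the prime $p$, then to identify $\scrA_{12,p}$ explicitly as a non-maximal Eichler order in $\Mat_2(K_{12,p})$, and finally to apply the classification of lattices over Eichler orders recalled in \eqref{eq:eichler}. For the first reduction, note that $\scrE_{12}=K_{12}$ is a field of class number one and $A_{12}=O_{K_{12}}$ is already the maximal order, so every endomorphism ring $O_\Lambda=\End_{\scrA_{12}}(\Lambda)\subseteq K_{12}$ is forced to equal $O_{K_{12}}$ and therefore has class number one. Combined with $S(12,p)=\{p\}$, this yields
\[
o(12)=\sum_{\text{genera}}h(O_\Lambda)=|\scrL_p(12)|,
\]
exactly as in \eqref{eq:13}.

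Next I would pin down $\scrA_{12,p}$ at each $p\in\{2,3\}$. The completion $K_{12,p}=\Q_p(\zeta_3,\zeta_4)$ has degree four over $\Q_p$ with ramification index $e=2$ and residue degree $f=2$, and its unique unramified quadratic subfield $L_p$ equals $\Q_2(\zeta_3)$ when $p=2$ and $\Q_3(\zeta_4)=\Q_3(i)$ when $p=3$. Writing the maximal order of $D_p$ as $\calO_p=O_{L_p}\oplus O_{L_p}\Pi$ with $\Pi^2\in p\Z_p^\times$ and $\Pi\ell=\bar\ell\Pi$ for $\ell\in O_{L_p}$, the fact that $L_p$ splits $D_p$, together with the \'etale splitting $O_{L_p}\otimes_{\Z_p}O_{L_p}\simeq O_{L_p}\times O_{L_p}$, identifies $O_{L_p}\otimes_{\Z_p}\calO_p$ inside $\Mat_2(L_p)$ with the hereditary Eichler order
\[
O_{L_p}\otimes_{\Z_p}\calO_p\simeq\begin{bmatrix}O_{L_p}&O_{L_p}\\ p O_{L_p}&O_{L_p}\end{bmatrix}.
\]
Base-changing along $O_{L_p}\hookrightarrow O_{K_{12,p}}$ and using that $p O_{K_{12,p}}=\pi^2 O_{K_{12,p}}$ for a uniformizer $\pi$ of $O_{K_{12,p}}$ (since $e=2$), I obtain
\[
\scrA_{12,p}=O_{K_{12,p}}\otimes_{O_{L_p}}\bigl(O_{L_p}\otimes_{\Z_p}\calO_p\bigr)\simeq\begin{bmatrix}O_{K_{12,p}}&O_{K_{12,p}}\\ \pi^2 O_{K_{12,p}}&O_{K_{12,p}}\end{bmatrix},
\]
a non-maximal Eichler order in $\Mat_2(K_{12,p})$ of level $\pi^2$ (in particular with Eichler invariant $e(\scrA_{12,p})=1$).

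Finally, $V_p\simeq K_{12,p}^2$ is a single copy of the simple $\Mat_2(K_{12,p})$-module, so $u=1$ in \eqref{eq:eichler}. The lemma on Eichler lattices then gives exactly three isomorphism classes of $\scrA_{12,p}$-lattices, represented by $\left[\begin{smallmatrix}O_{K_{12,p}}\\ \pi^j O_{K_{12,p}}\end{smallmatrix}\right]$ for $j\in\{0,1,2\}$. Thus $|\scrL_p(12)|=3$ and $o(12)=3$ for both $p\in\{2,3\}$. The main obstacle here is the explicit identification of $\scrA_{12,p}$: one has to notice the inclusion $L_p\subset K_{12,p}$ and correctly track the $\pi$-adic valuation of $p$ through the base change $O_{L_p}\hookrightarrow O_{K_{12,p}}$. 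Once this is done, $\scrA_{12,p}$ turns out to be a genuine Eichler order rather than a more general Bass order, so the simple formula \eqref{eq:eichler} applies and the heavier Bass-order machinery of Proposition~\ref{cor:lattices-over-Bass} is not required.
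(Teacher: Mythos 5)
Your proof is correct and follows essentially the same route as the paper: reduce $o(12)$ to $\abs{\scrL_p(12)}$ via the class-number-one observation for $O_\Lambda=A_{12}=O_{K_{12}}$, identify $\scrA_{12,p}$ as the Eichler order of level $\varpi_p^2$ in $\Mat_2(K_{12,p})$, and count the three lattices in the simple module $K_{12,p}^2$ using the Eichler lattice classification. The only difference is in how you identify the order: you carry out an explicit base change $O_{L_p}\otimes_{\Z_p}\calO_p\hookrightarrow O_{K_{12,p}}\otimes_{\Z_p}\calO_p$ through the unramified quadratic subfield $L_p\subset K_{12,p}$ and track the $\pi$-adic valuation of $p$, whereas the paper computes the Eichler invariant $e(\scrA_{12,p})=1$ from the residue degree, computes $\grd(\scrA_{12,p})=\varpi_p^2A_{12,p}$, and invokes Brzezinski's result that these two invariants pin down the Eichler order. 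Your construction is slightly more self-contained and makes the identification visible without outside citations, while the paper's is shorter and reuses the same discriminant computation \eqref{eq:28} employed for $n\in\{3,4\}$; both lead to exactly the same lattice count.
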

\begin{proof}
Since $2$ and $3$ are ramified in $K_n$, $2$ is inert in $\Q(\zeta_3)$ and $3$ is inert in $\Q(\zeta_4)$, the $p$-adic completion $K_{n, p}$ is a field extension of degree $4$ over
  $\Q_p$ with residue degree $2$, so
  $e(\scrA_{n, p})=e(A_{n,p}\otimes_{\Z_p}\calO_p)=1$ by
  \cite[\S2.4]{li-xue-yu:unit-gp}. A similar calculation as 
  (\ref{eq:28}) shows that $\grd(\scrA_{n,p})=\varpi_p^2A_{n,p}$, where 
$\varpi_p$ denotes a  uniformizer  of $K_{n, p}$.  From
\cite[Proposition~2.1]{Brzezinski-1983}, we may identify  $\scrA_{n,
  p}$ with the
Eichler order $\begin{bmatrix}
     A_{n, p} & A_{n, p}\\
 \varpi_p^2A_{n, p}& A_{n, p}
   \end{bmatrix}$.
Since $V_p\simeq K_{n,p}^2$ is a
simple $\Mat_2(K_{n, p})$-module, every $\scrA_{n,
  p}$-lattice in $V_p$ is isomorphic to one of the following lattices: 
\begin{equation}
  \label{eq:26}
  \begin{bmatrix}
    A_{n, p}\\ \varpi_p^2A_{n, p}
  \end{bmatrix},\qquad   \begin{bmatrix}
    A_{n, p}\\\varpi_p A_{n, p}
  \end{bmatrix},\qquad   \begin{bmatrix}
    A_{n, p}\\ A_{n, p}
  \end{bmatrix}.
\end{equation}
Therefore, $o(12)=\abs{\scrL_p(12)}=3$ by (\ref{eq:13}).  
\end{proof}

\begin{rem}
We have proved (cf.~\eqref{eq:13}) that in the isotypic case, every genus in the set $\scrL (n)$ of lattice classes has class number one. This holds also in the case $p\nmid n$ according to  \cite[(4.3)]{xue-yang-yu:sp_as2}.
\end{rem}
  

\section{The non-isotypic case}
\label{sec:non-isotypic-case}
In this section, we compute the values of $o(\uln)$ for  $\uln=(n_1, n_2)\in
\brN^2$ and $p$ satisfying condition (C1) or (C2) (or both). More explicitly, 
the pairs
$(\uln, p)$ are listed in the following
table:
\begin{center}
\begin{tabular}{*{7}{|>{$}c<{$}}|}
\hline
  \uln=(n_1, n_2) & (1, 2) & (2, 3) & (2, 4) & (2, 6) & (3,
                                                                  4) &
                                                                       (3,
                                                                       6)
  \\
\hline
p &  2 & 3 &2 & 3 & 2, 3& 2 , 3\\
  \hline
  i(\uln)& 2 & 1 & 2 & 3 & 1 & 4\\
  \hline
\end{tabular}
\end{center}
Here we have also included the index $i(\uln)=[O_{K_\uln}:A_{\uln}]$,
where $O_{K_\uln}=A_{n_1}\times A_{n_2}$ is the unique maximal order
of $K_\uln$.

Since $K_{\uln}=K_{n_1}\times K_{n_2}$, we have
$\scrK_\uln=\scrK_{n_1}\times \scrK_{n_2}$. Consequently, the left
$\scrK_\uln$-module $V=D^2$ decomposes into a product $V_{n_1}\times V_{n_2}$, where each $V_{n_i}$ is a simple left
$\scrK_{n_i}$-module  with $\dim_{D}V_{n_i}=1$.  In turn, 
$\scrE_\uln=\scrE_{n_1}\times \scrE_{n_2}$ with
$\scrE_{n_i}:=\End_{\scrK_{n_i}}(V_{n_i})$. 
If $n_i\in \{1, 2\}$,
then $K_{n_i}=\Q$, so we have 
\begin{equation}
  \label{eq:30}
  \scrK_{n_i}=D, \quad V_{n_i}\simeq D, \quad \scrE_{n_i}\simeq D
  \quad\text{if} \quad n_i\in \{1, 2\}. 
\end{equation}
If $n_i\in
\{3, 4, 6\}$, then $K_{n_i}$ is an imaginary quadratic extension of
$\Q$, and $p$ does not split completely in $K_{n_i}$. Thus
\begin{equation}
  \label{eq:29}
 \scrK_{n_i}\simeq \Mat_2(K_{n_i}),\quad  V_{n_i}\simeq K_{n_i}^2,
 \quad \scrE_{n_i}=K_{n_i} \quad\text{if} \quad n_i\in
\{3, 4, 6\}.
\end{equation}
To avoid conflict of notations between $V_{n_i}$ and $V_\ell$,  we will always write the full expression
$V\otimes \Q_\ell$ instead of $V_\ell$ for
the $\ell$-adic completion of $V$. On the other hand, the subscript 
$_\uln$ will never be expanded out explicitly as $_{(n_1, n_2)}$ nor
$_{n_1, n_2}$, so there
should be no ambiguity about $\scrA_{n_i,
  \ell}:=\scrA_{n_i}\otimes_\Z\Z_\ell $. 

If $\ell\in \bbN$ is a prime with $\ell\nmid i(\uln)$ and $\ell\neq
p$, then $A_{\uln, \ell}=O_{K_\uln, \ell}$, and $\calO_\ell\simeq
\Mat_2(\Z_\ell)$, which implies that 
\[\scrA_{\uln, \ell}=A_{\uln, \ell}\otimes_{\Z_\ell}\calO_\ell\simeq
\Mat_2(O_{K_\uln, \ell}).\]
 Thus $\scrA_{\uln, \ell}$ is
maximal in $\scrK_{\uln, \ell}$ for such an  $\ell$. From this, we can 
easily write the set  $S(\uln, p)$ of primes at which $\scrA_\uln$
is non-maximal: 
\begin{equation}
  \label{eq:31}
  S(\uln, p)=
  \begin{cases}
    \{2, 3\} &\text{if } \uln=(3, 6)\text{ and } p=3;\\
    \{p\}&\text{otherwise}. 
  \end{cases}
\end{equation}
Recall that the class number $h(\calO)$ is given by the following
formula \cite[Proposition~V.3.2]{vigneras}
\begin{equation}
  \label{eq:32}
  h(\calO)=\frac{p-1}{12}+\frac{1}{3}\left(1-\Lsymb{-3}{p}\right)+\frac{1}{4}\left(1-\Lsymb{-4}{p}\right),  
\end{equation}
where $\Lsymb{\cdot}{p}$ denotes the Legendre symbol. In particular,
$h(\calO)=1$ if $p=2,3$. We also note that $h(A_{n_i})=1$ for every
$n_i\in \{3, 4, 6\}$. Given $d\in \bbN$,  we write $\Q_{\ell^d}$ for the
unique unramified extension of degree $d$ over $\Q_\ell$, and
$\Z_{\ell^d}$ for its ring of integers.


\begin{lem}
$o(2,3)=1$ if $p=3$, and $o(3, 4)=2$ if $p\in \{2, 3\}$. 
\end{lem}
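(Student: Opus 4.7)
The plan is to exploit the fact that $i(\uln)=1$ for both $\uln\in\{(2,3),(3,4)\}$, which gives $A_\uln=A_{n_1}\times A_{n_2}$ and hence $\scrA_\uln=\scrA_{n_1}\times\scrA_{n_2}$ as a product of $\Z$-orders. Correspondingly the $\scrK_\uln$-module $V$ splits as $V_{n_1}\oplus V_{n_2}$, and the two central idempotents of $A_\uln$ decompose any $\scrA_\uln$-lattice $\Lambda\subset V$ uniquely as $\Lambda=\Lambda_{n_1}\oplus\Lambda_{n_2}$ with $\Lambda_{n_i}$ an $\scrA_{n_i}$-lattice in $V_{n_i}$. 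Hence the count factors as $o(\uln)=o_{n_1}\cdot o_{n_2}$, where $o_{n_i}$ denotes the number of isomorphism classes of $\scrA_{n_i}$-lattices in $V_{n_i}$. Moreover \eqref{eq:31} gives $S(\uln,p)=\{p\}$ in every case, so only the completion at $p$ needs attention: at each prime $\ell\neq p$ the order $\scrA_{n_i,\ell}$ is maximal in $\scrK_{n_i,\ell}$ and contributes a unique lattice class.

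For a factor with $n_i=2$, the setup collapses to $\scrA_{2,p}=\calO_p$ maximal in $V_{2}\simeq D$, so $o_2$ is simply the global class number $h(\calO)=1$ for $p\in\{2,3\}$ by \eqref{eq:32}. For a factor with $n_i\in\{3,4\}$, the module $V_{n_i}\simeq K_{n_i}^2$ is the simple $\scrK_{n_i}\simeq\Mat_2(K_{n_i})$-module, and I would split the analysis at $p$ into two subcases. If $p\mid n_i$, then $K_{n_i,p}$ is totally ramified quadratic over $\Q_p$, the Eichler invariant satisfies $e(\scrA_{n_i,p})=e(\calO_p)=-1$ as in \eqref{eq:48}, the order is Bass by \cite[Proposition~3.1]{Brzezinski-1983}, and Lemma~\ref{lem:simple-module-ram} yields a unique lattice class. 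If instead $p\nmid n_i$ and $p$ is inert in $K_{n_i}$, then $K_{n_i,p}$ is the unramified quadratic extension of $\Q_p$, which splits $D_p$ so that $\scrK_{n_i,p}\simeq\Mat_2(K_{n_i,p})$; a discriminant computation parallel to \eqref{eq:28} gives $\grd(\scrA_{n_i,p})=(p)$, a square-free ideal, so $\scrA_{n_i,p}$ is a non-maximal hereditary Eichler order of level one with Eichler invariant $1$, and the Eichler-order classification produces exactly two lattice classes in the simple module.

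Assembling the counts: for $(2,3)$ at $p=3$ both factors contribute $1$, giving $o((2,3))=1$; for $(3,4)$ at $p=3$ the factor at $n_1=3$ contributes $1$ (ramified) and the factor at $n_2=4$ contributes $2$ (since $3$ is inert in $\Q(i)$), giving $o((3,4))=2$; and for $(3,4)$ at $p=2$ the roles are reversed, since $2$ is inert in $\Q(\zeta_3)$ and ramified in $\Q(i)$, again producing $o((3,4))=2$. In every case the endomorphism ring of each genus is the maximal order $\calO\times A_{n_2}$ or $A_3\times A_4$ in $\scrE_\uln$, because in the simple factors $\End_{\scrA_{n_i}}(\Lambda_{n_i})$ is forced to coincide with $A_{n_i}$. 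The global class number per genus is therefore $h(\calO)\cdot 1=1$ or $h(A_3)\cdot h(A_4)=1$, and no multiplicities arise at the class-number step. The only non-routine point is the identification of the Eichler invariant of $\scrA_{n_i,p}$ in the unramified inert subcase and the resulting hereditary Eichler structure; this is settled by the short discriminant computation indicated above.
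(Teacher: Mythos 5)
Your proposal is correct and follows essentially the same route as the paper: exploit $i(\uln)=1$ to write $\scrA_\uln=\scrA_{n_1}\times\scrA_{n_2}$ and factor $o(\uln)=o_{n_1}\cdot o_{n_2}$, then compute each factor locally at $p$ using the Bass/Eichler-order classification (Lemma~\ref{lem:simple-module-ram} in the ramified subcase, the Eichler-order lemma in the inert subcase) and finish with $h(\calO)=h(A_{n_i})=1$. The only cosmetic difference is that you identify $\scrA_{n_i,p}$ as the level-one Eichler order $\left[\begin{smallmatrix}\Z_{p^2}&\Z_{p^2}\\ p\Z_{p^2}&\Z_{p^2}\end{smallmatrix}\right]$ by a square-free discriminant argument, whereas the paper cites \cite[Lemma~2.10]{li-xue-yu:unit-gp} for the same identification.
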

\begin{proof}
For $\uln=(2, 3)$ or $(3, 4)$, we have
  $A_\uln=A_{n_1}\times A_{n_2}$, and hence
  $\scrA_\uln=\scrA_{n_1}\times \scrA_{n_2}$. Any
  $\scrA_{\uln}$-lattice $\Lambda\subset V$ decomposes into a
  product $\Lambda_{n_1}\times \Lambda_{n_2}$, where each
  $\Lambda_{n_i}$ is an $\scrA_{n_i}$-lattice in $V_{n_i}$.  Thus the
  techniques developed in Section~\ref{sec:isotypic-case} applies
  here. 

  
  First suppose that $\uln=(2, 3)$ and $p=3$.  In this case,
  $\scrA_{n_1}=\Z\otimes_\Z \calO=\calO$, and $V_{n_1}\simeq D$ by
  (\ref{eq:30}). Since $h(\calO)=1$, there is a unique isomorphism
  class of $\calO$-lattices in $V_{n_1}$.  As $S(\uln, p)=\{3\}$ in
  this case, we consider the $\scrA_{n_2,
    3}$-lattices in $V_{n_2}\otimes
  \Q_3\simeq K_{n_2, 3}^2$.  From
  Lemma~\ref{lem:simple-module-ram},  there is a unique
  $\scrA_{n_2, 3}$-lattice up to isomorphism in $V_{n_2}\otimes
  \Q_3$.  
 This implies that   there  is only a
single genus of $\scrA_{n_2}$-lattices in $V_{n_2}$.  Note that
  $\End_{\scrA_{n_2}}(\Lambda_{n_2})=A_{n_2}\simeq\Z[\zeta_3]$, which
  has class number $1$. Thus there is a unique isomorphism class of
  $\scrA_{n_2}$-lattices in $V_{n_2}$. As a result,
  $o(2,3)=1\cdot 1=1$ if $p=3$.

  Next, suppose that $\uln=(3, 4)$ and $p=2$. Since $2$ is ramified in
  $K_{n_2}=\Q(\zeta_4)$, the same proof as
  above shows that there is a unique isomorphism class of
  $\scrA_{n_2}$-lattices in $V_{n_2}$ in this case. On the other hand,
  $2$ is inert in $K_{n_1}=\Q(\zeta_3)$, so 
  $A_{n_1, 2}=\Z_4$.  It follows from
  \cite[Lemma~2.10]{li-xue-yu:unit-gp} that
  \begin{equation}
    \label{eq:33}
\scrA_{n_1,2}=A_{n_1, 2}\otimes_{\Z_2}\calO_2=\Z_4\otimes_{\Z_2}\calO_2\simeq 
    \begin{bmatrix}
      \Z_4& \Z_4\\ 2\Z_4 & \Z_4
    \end{bmatrix}.
  \end{equation}
Hence every $\scrA_{n_1, 2}$-lattices in
$V_{n_1}\otimes_\Q \Q_2$ is isomorphic to either $\left[\begin{smallmatrix}
  \Z_4\\ 2\Z_4
\end{smallmatrix}\right]$ or $\left[\begin{smallmatrix}
  \Z_4\\ \Z_4
\end{smallmatrix}\right]$. 
We find that there are two genera of $\scrA_{n_1}$-lattices in
$V_{n_1}$, each consisting of a unique isomorphism class since
$h(A_{n_1})=h(\Z[\zeta_3])=1$. Therefore, $o(3, 4)=(1+1)\cdot 1=2$ if
$p=2$. 

Lastly, the value of $o(3, 4)$ for $p=3$ can be computed in exactly the same
  way as above, since $3$ is ramified in $K_{n_1}$ and inert in
  $K_{n_2}$. 
\end{proof}

\begin{lem}\label{lem:o-1-2}
$o(1,2)=3$ if $p=2$. 
\end{lem}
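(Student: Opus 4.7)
The plan is to carry out the two-step strategy of Section~\ref{sec:gen-strategy}. For $\uln=(1,2)$ and $p=2$ we have $K_\uln=\Q\times\Q$, so $\scrK_\uln=D\times D$, $V=V_1\oplus V_2$ with each $V_{n_i}\simeq D$ by \eqref{eq:30}, and $\scrE_\uln=D\times D$. Since $i(\uln)=2$, we have $S(\uln,2)=\{2\}$. Starting from $A_{\uln,2}=\Z_2[T]/(T^2-1)\simeq\{(a,b)\in\Z_2^2:a\equiv b\pmod 2\}$ and tensoring over $\Z_2$ with $\calO_2$, a direct computation gives
\[
\scrA_{\uln,2}=\{(x,y)\in\calO_2\times\calO_2 : x\equiv y\pmod{2\calO_2}\}.
\]
The cokernel of $\scrA_{\uln,2}\hookrightarrow\calO_2\times\calO_2$ is $\calO_2/2\calO_2$, a length-two left $\calO_2$-module with a unique non-trivial submodule, so $\scrA_{\uln,2}$ has exactly three overorders in $\calO_2\times\calO_2$: itself, $\scrO':=\{(x,y):x\equiv y\pmod{\Pi\calO_2}\}$ (with $\Pi$ a uniformizer of $\calO_2$), and $\calO_2\times\calO_2$.

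For Step~1 (classifying the genera), the key feature of $p=2$ is the congruence $T=(1,-1)\equiv(1,1)\pmod{2\calO_2}$. Given an $\scrA_{\uln,2}$-lattice $\Lambda\subset V\otimes\Q_2=D_2\oplus D_2$ with coordinate projections $\Lambda^{(i)}$, applying $1\pm T$ to any $v=(v_1,v_2)\in\Lambda$ produces $(2v_1,0),(0,2v_2)\in\Lambda$, whence $2\Lambda^{(1)}\oplus 2\Lambda^{(2)}\subseteq\Lambda\subseteq\Lambda^{(1)}\oplus\Lambda^{(2)}$. Using $\scrE_{\uln,2}^\times=(D_2^\times)^2$ one normalizes $\Lambda^{(1)}=\Lambda^{(2)}=\calO_2$; since $T$ reduces to $1$ modulo $2$, the quotient $Q:=\Lambda/2(\calO_2\oplus\calO_2)$ is a left $R$-submodule of $R\oplus R$ (with $R:=\calO_2/2\calO_2$ acting diagonally) with surjective projections to both factors, and every such $Q$ arises. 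A direct enumeration by $R$-length yields one submodule of length $4$ (namely $R\oplus R$), three of length $3$ (kernels of surjections $R^2\twoheadrightarrow R/\grm\simeq\F_4$ defined by functionals with both coordinates in $\F_4^\times$), and twelve of length $2$ (kernels of surjections $R^2\twoheadrightarrow R$ defined by $(a,b)\in(R^\times)^2$ up to right scaling on $R$); shorter submodules lie in $\grm\oplus\grm$ and fail surjective projection. Each length contributes a single orbit under the residual $R^\times\times R^\times$-action, so $\lvert\scrL_2(\uln)\rvert=3$, and tracking endomorphism rings identifies the three genera with $O_{\Lambda,2}\in\{\scrA_{\uln,2},\scrO',\calO_2\times\calO_2\}$.

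For Step~2 (class numbers), each global endomorphism ring $O_\Lambda$ is maximal outside $\ell=2$ and hence equals one of the three orders $\scrA_\uln\subset\scrO'\subset\calO\times\calO$. Since $h(\calO)=1$ at $p=2$ by \eqref{eq:32}, we have $h(\calO\times\calO)=1$, and the standard adelic description gives $h(O_\Lambda)=\lvert(\calO^\times\times\calO^\times)\backslash\wh U_{\max}/\wh U_\Lambda\rvert$ with $\wh U_{\max}=\wh{(\calO\times\calO)}^\times$, $\wh U_\Lambda=\wh{O_\Lambda}^\times$, and $\calO^\times\times\calO^\times$ acting by left multiplication through its image in $\wh U_{\max}/\wh U_\Lambda$. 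The coset space $\wh U_{\max}/\wh U_\Lambda$ is concentrated at $\ell=2$ and parameterized by $(g_1,g_2)\mapsto\bar g_1\bar g_2^{-1}$, taking values in $R^\times$ (of order $12$) for $\scrA_\uln$ and $\F_4^\times$ (of order $3$) for $\scrO'$. Transitivity of the $\calO^\times\times\calO^\times$-action on the orbit of the identity reduces to the surjectivity of $\calO^\times\to R^\times$, which I would verify using $\lvert\calO^\times\rvert=24$ (the Hurwitz unit group for $p=2$) together with the elementary check that the only units of $\calO$ of the form $1+2x$ are $\pm 1$; this forces the image of $\calO^\times$ in $R^\times$ to have order $24/2=12$. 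Hence $h(\scrA_\uln)=h(\scrO')=1$, and $o(1,2)=1+1+1=3$.

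The main obstacle is Step~1: because $\scrA_{\uln,2}$ lives in the non-simple algebra $D_2\times D_2$, Proposition~\ref{cor:lattices-over-Bass} does not directly apply and the enumeration must be done by hand. The decisive simplification is the congruence $-1\equiv 1\pmod 2$, which trivializes the twist $T$ modulo the conductor; this is precisely the mechanism that places $p=2$ outside condition~\eqref{eq:6}.
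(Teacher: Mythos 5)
Your proposal is correct and the answer $o(1,2)=3$ is right, but you reach it by a genuinely different route in Step~1 than the paper. The paper shows $\scrA_{\uln,2}$ is a Bass order (as a subdirect sum of two copies of $\calO_2$), proves that $\scrA_{\uln,2}$ and the middle order $\scrR_2$ are \emph{completely primary} Gorenstein orders, and then invokes Roggenkamp's theorem on the uniqueness of minimal overorders together with Brzezinski's criterion (proper $\Rightarrow$ principal for such orders) to conclude that every $\scrA_{\uln,2}$-lattice in $V\otimes\Q_2$ is isomorphic to one of the three overorders. You instead reduce the lattice classification to an elementary, hands-on enumeration: the congruence $T=(1,-1)\equiv(1,1)\pmod{2\calO_2}$ trivializes the $A_\uln$-twist on the quotient $\Lambda/2(\calO_2\oplus\calO_2)$, turning the problem into counting $R$-submodules of $R\oplus R$ (with $R=\calO_2/2\calO_2$) having surjective projections, modulo $R^\times\times R^\times$. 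Your count (one submodule of length $4$, three of length $3$ parametrized by $[a:b]\in\P^1(\F_4)$ with $a,b\neq 0$, twelve of length $2$ as graphs of right-unit multiplications, one orbit per length) is correct and matches the three overorders. This is actually the same reduction-mod-$J\Delta$ technique the paper deploys later for $(3,6)$ and $(2,2p)$; you are applying it here in place of the structural argument, which is a perfectly good alternative and arguably more self-contained, at the cost of not extracting the overorder structure ``for free.'' For Step~2 your adelic computation agrees with the paper's; you verify the surjectivity of $\calO^\times\to(\calO/2\calO)^\times$ by the elementary observation that $\alpha\in\calO^\times$ with $\alpha\equiv 1\pmod{2\calO}$ satisfies $\alpha^2=1$, hence $\alpha=\pm 1$ in a division algebra, whereas the paper cites Serre's lemma for this kernel computation — same content, different packaging. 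You also correctly identify why Proposition~\ref{cor:lattices-over-Bass} does not apply directly (the algebra $D_2\times D_2$ is not simple), which is precisely the obstacle the paper's completely-primary machinery is designed to overcome.
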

\begin{proof}
Set $\uln=(1,2)$ throughout this proof. 
  In this case,  $\scrA_{\uln}$ is non-maximal only at $p=2$, and $\calO_2$ is the unique
maximal order in the division quaternion $\Q_2$-algebra
$D_2$. 
  From \cite[(5.4)]{xue-yang-yu:sp_as2}, we have 
\[A_\uln=\{(a,b)\in \Z\times \Z\mid a\equiv b\pmod{2}\}, \]
which implies that 
\begin{equation}
  \label{eq:34}
  \scrA_{\uln, 2}=A_\uln\otimes_\Z\calO_2=\{(x,y)\in \calO_2\times \calO_2\mid x\equiv y\pmod{2\calO_2}\}.
\end{equation}
In particular, $  \scrA_{\uln, 2}$ is a subdirect sum\footnote{Let $\{R_i\mid 1\leq i\leq n\}$ be a finite set of (unital) rings, and
$R:=\prod_{i=1}^n R_i$ be their direct product. 
A
ring $T$ is called a \emph{subdirect sum} of the $R_i$'s if there
exists an embedding $\rho: T\to R$ such that every canonical
projection $\pr_i: R\to R_i$ maps $\rho(T)$ surjectively onto
$R_i$.}   of two copies of $\calO_2$, so it is a Bass order by 
\cite[Proposition~12.3]{Drozd-Kirichenko-Roiter-1967}. 

Let $\grP$ be the unique two-sided prime ideal of $\calO$ above
$p=2$. We put 
\begin{equation}
  \label{eq:35}
  \scrR:=\{(x,y)\in \calO\times \calO\mid x\equiv
  y\pmod{\grP}\}, 
\end{equation}
which has index $4$ in
the maximal order $\bbO:=\calO\times\calO$ in
$\scrK_{\uln}=D\times D$.  Indeed, $\scrR/(\grP\times
\grP)\simeq \calO/\grP\simeq 
\F_4$ while $\bbO/(\grP\times \grP)\simeq \F_4\times \F_4$. 
Clearly, both
$\scrR$ and $\bbO$ are overorders of $\scrA_{\uln}$. We
claim that there are no other overorders except $\scrA_{\uln}$ itself. 
It is enough to prove this locally at $p=2$. From $\scrA_{\uln,2}/(2\calO_2\times
2\calO_2)\simeq \calO_2/2\calO_2$, we find that 
\begin{equation}
  \label{eq:36}
\scrA_{\uln,2}/\grJ(\scrA_{\uln,2})\simeq
\calO_2/\grJ(\calO_2)\simeq \F_4,    
\end{equation}
where $\grJ(\cdot)$ denotes the Jacboson radical. Hence $ \scrA_{\uln, 2}$
is completely primary in the sense of
\cite[p.~262]{Roggenkamp-Latt-II}. Now according to
\cite[Lemma~6.6]{Roggenkamp-Latt-II}, every non-maximal completely
primary Gorenstein order has a unique minimal
overorder. As $ \scrA_{\uln, 2}$ has index
$4$ in $\scrR_2$,  the left $\scrA_{\uln,2}$-module
$\scrR_2/\scrA_{\uln, 2}$ is isomorphic to the unique simple left
$\scrA_{\uln,2}$-module $\F_4$.  Thus $\scrR_2$ coincides with
the unique minimal overorder of $\scrA_{\uln,2}$.   
Similarly, $\scrR_2$ is also completely
primary, and $\bbO_2$ is the unique minimal overorder
of $\scrR_2$ by the same argument.  This verifies the claim about the
overorders of $\scrA_{\uln}$.

In this case, $V\otimes\Q_2$ is a free left module of rank $1$ over
$\scrK_{\uln,2}$.  For any
$\scrA_{\uln,2}$-lattice in $V\otimes \Q_2$, its associated left
order  necessarily coincides with one of the three
overorders of $\scrA_{\uln,2}$. Since $\scrA_{\uln,2}$ is completely primary, it
is indecomposable as a left module over itself. Taking into account
that $\scrA_{\uln, 2}$ is Gorenstein, it follows from
\cite[Proposition~2.3]{Brzezinski-loc-Princ} that every proper
$\scrA_{\uln,2}$-lattice in the $V\otimes \Q_2$ is principal. This holds for
$\scrR_2$ as well by the same token. On the other hand, every proper
$\bbO_2$-lattice in the $V\otimes \Q_2$ is principal  since
$\bbO_2$ is maximal. Therefore,  every
$\scrA_{\uln,2}$-lattice in $V\otimes \Q_2$ is isomorphic to one of the
following
\begin{equation}
  \label{eq:38}
\scrA_{\uln,2}, \qquad \scrR_2, \qquad \bbO_2. 
\end{equation}
We find that there are three genera of $\scrA_\uln$-lattices in
$V$ represented by $\scrA_\uln, \scrR$ and $\bbO$
respectively. Clearly,
\[  \End_{\scrA_\uln}(\scrA_\uln)=\scrA_\uln^{\mathrm{opp}},\quad   \End_{\scrA_\uln}(\scrR)=\scrR^{\mathrm{opp}},\quad \End_{\scrA_\uln}(\bbO)=\bbO^{\mathrm{opp}}.\]
In each case, the opposite ring can be canonically identified with the
original ring itself,
so we drop the superscript $^{\mathrm{opp}}$ henceforth.

It remains to show that
$h(\scrA_\uln)=h(\scrR)=h(\bbO)=1$. This clearly holds
true for the maximal order $\bbO=\calO\times \calO$ since
$h(\bbO)=h(\calO)\cdot h(\calO)=1\cdot1=1$. If we prove
$h(\scrA_\uln)=1$, then $h(\scrR)=1$ since
$h(\scrA_\uln)\geq h(\scrR)$ by
\cite[(6.1)]{xue-yang-yu:sp_as2}.  Let $\wh\bbO$
(resp.~$\wh\scrA_\uln$) be the profinite completion of $\bbO$
(resp.~$\scrA_\uln$). Since $h(\bbO)=1$, it follows from
\cite[(6.3)]{xue-yang-yu:sp_as2} that
\begin{equation}
  \label{eq:41}
  h(\scrA_\uln)=\abs{\bbO^\times\backslash \wh\bbO^\times/(\wh\scrA_\uln)^\times}. 
\end{equation}
Clearly, $\scrA_{\uln, 2}^\times\supseteq 1+2\bbO_2$ and
$\scrA_{\uln, \ell}^\times=\wh\bbO_\ell^\times$ for every prime
$\ell\neq 2$, so there is an $\wh\bbO^\times$-equivariant
projection
\[(\bbO/2\bbO)^\times\simeq \bbO_2^\times/(1+2\bbO_2)
  \twoheadrightarrow \wh\bbO^\times/(\wh \scrA_\uln)^\times. \] Hence
to prove $ h(\scrA_\uln)=1$, it is enough to show that the canonical
projection $\bbO^\times\to (\bbO/2\bbO)^\times$ is surjective. Since
$\bbO=\calO\times \calO$, this amounts to show that
\begin{equation}
  \label{eq:76}
 \varphi: \calO^\times\to (\calO/2\calO)^\times\quad \text{is surjective}.  
\end{equation}
From \cite[Proposition~V.3.1]{vigneras},
$\calO^\times\simeq \SL_2(\F_3)$, which has order $24$. On the other
hand,
\[\abs{(\calO/2\calO)^\times}=\abs{(\calO/\grP^2)^\times}=\abs{\F_4^\times}\cdot
  \abs{\F_4}=3\cdot 4=12. \] Thus to prove the surjectivity of
$\varphi$, it suffices to show that $\ker(\varphi)=\{\pm1\}$. Since
every $\alpha\in \calO^\times$ has finite group order, this follows from a
well-known lemma of Serre \cite[Theorem,
p.~17--19]{Serre-lem-to-Grothen} (See
also  \cite[Lemma, p.~192]{mumford:av},  \cite{Zarhin-Silverberg:Serre-lem} and
\cite[Lemma~7.2]{karemaker2020mass} for some variations and  generalizations).
Therefore, $h(\scrA_\uln)=1$ as claimed.


In conclusion, we have
\[o(1, 2)=h(\scrA_\uln)+h(\scrR)+h(\bbO)=1+1+1=3.\qedhere\]
\end{proof}

For $\uln=(3, 6)$, the  values of $o(\uln)$ for $p\in \{2, 3\}$ will be calculated
in a few steps. 
\begin{lem}\label{lem:3-6-step1}
  $o(3,6)=\prod_{\ell\in S(\uln,  p)}\abs{\scrL_\ell(\uln)}$ for both $p\in \{2, 3\}$, where the set $S(\uln,  p)$ is given in \eqref{eq:31}.
\end{lem}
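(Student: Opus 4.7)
The plan is to reduce the claimed equality to the statement that every genus contains a single isomorphism class. By the general framework recalled at the end of Section~\ref{sec:gen-strategy}, we have $o(\uln)=\abs{\scrL(\uln)}=\sum_{[\Lambda]}h(O_\Lambda)$, where the sum ranges over the genera in $\scrL(\uln)$; and by surjectivity of $\Psi$ there are exactly $\prod_{\ell\in S(\uln,p)}\abs{\scrL_\ell(\uln)}$ genera in total. Thus the lemma is equivalent to the assertion that $h(O_\Lambda)=1$ for every $\scrA_\uln$-lattice $\Lambda$ in $V$.

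To identify the possible $O_\Lambda$, observe that by \eqref{eq:29} the centralizer satisfies $\scrE_\uln=K_3\times K_6$; since both factors are isomorphic to $\Q(\zeta_3)$, we are dealing with commutative orders in $K\times K$ with $K=\Q(\zeta_3)$. The central subalgebra $K_\uln\subset \scrK_\uln$ acts on $\Lambda$ via $A_\uln\subset \scrA_\uln$, so $A_\uln\subseteq O_\Lambda\subseteq O_{K_\uln}=A_3\times A_6$. Since $i(\uln)=4$, enumerating the overorders of $A_\uln$ reduces to the analogous question at $\ell=2$. Here I would identify $A_{\uln,2}$ as the $\Z_2$-subalgebra of $O_{K_\uln,2}\simeq \Z_4\times \Z_4$ generated by $(\zeta_3,\zeta_6)$; modulo $2$ this is the graph of the Frobenius of $\F_4/\F_2$ inside $\F_4\times\F_4$, a \emph{twisted diagonal} copy of $\F_4$. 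From this one deduces that $2(\Z_4\times\Z_4)\subset A_{\uln,2}$ and that $O_{K_\uln,2}/A_{\uln,2}\simeq \F_4$ is a simple $A_{\uln,2}$-module. Consequently, $A_\uln$ and $O_{K_\uln}$ are the only overorders of $A_\uln$, and $O_\Lambda\in\{A_\uln,\,O_{K_\uln}\}$.

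It remains to verify $h(A_\uln)=h(O_{K_\uln})=1$. The latter is immediate since $O_{K_\uln}\simeq \Z[\zeta_3]^2$ and $h(\Z[\zeta_3])=1$. For the former, I would apply the standard class-number formula for commutative orders,
\[
h(A_\uln)=h(O_{K_\uln})\cdot\frac{[\wh{O}_{K_\uln}^\times:\wh{A}_\uln^\times]}{[O_{K_\uln}^\times:A_\uln^\times]}.
\]
The local unit index at $\ell=2$ is the index of the twisted diagonal $\{(a,a^2)\mid a\in\F_4^\times\}$ inside $\F_4^\times\times\F_4^\times$, which equals $3$. Globally, the element $(\zeta_3,1)\in O_{K_\uln}^\times$ reduces modulo $2$ to $(\omega,1)$, which does not lie on the twisted diagonal, so $O_{K_\uln}^\times\to \wh{O}_{K_\uln}^\times/\wh{A}_\uln^\times$ is surjective and $[O_{K_\uln}^\times:A_\uln^\times]=3$ as well. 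The two indices cancel, giving $h(A_\uln)=1$. The main obstacle in the plan is the overorder classification at $\ell=2$ together with the matching of local and global unit indices; everything else is bookkeeping.
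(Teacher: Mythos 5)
Your proposal is correct and follows essentially the same route as the paper: reduce to showing every genus has class number one, observe that $O_\Lambda$ is a commutative order with $A_\uln\subseteq O_\Lambda\subseteq O_{K_\uln}$, check that the only overorders are $A_\uln$ and $O_{K_\uln}$ because $O_{K_\uln,2}/A_{\uln,2}$ is a simple $\F_4$-module, and verify both class numbers are $1$. The one substantive difference is that the paper simply cites its predecessor (\cite[(5.7) and (5.8)]{xue-yang-yu:sp_as2}) for the congruence description of $A_\uln$ and for $h(A_\uln)=h(O_{K_\uln})=1$, whereas you re-derive $h(A_\uln)=1$ from scratch via the Picard-group exact sequence and a unit-index computation — a more self-contained treatment. (A cosmetic remark: the paper's choice of identification $A_6\simeq\Z[\zeta_3]$ makes $A_\uln$ reduce to the honest diagonal of $\F_4\times\F_4$ rather than your twisted diagonal, but the two presentations are conjugate by $(1,\Frob)$ and all indices and module structures agree.)
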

\begin{proof}
We identify both $A_3$ and 
$A_6$ with $\Z[\zeta_3]$ via the following maps:
\[\Z[T]/(T^2+T+1)\to
  \Z[\zeta_3],\quad T\mapsto \zeta_3, \quad\text{and}\quad \Z[T]/(T^2-T+1)\to \Z[\zeta_3], \quad T\mapsto
  -\zeta_3  .\]
As a result, the maximal order $O_{K_\uln}$ of $K_\uln$ is identified
with  $\Z[\zeta_3]\times \Z[\zeta_3]$. 
From \cite[(5.7)]{xue-yang-yu:sp_as2}, we have
\begin{equation}
  \label{eq:39}
  A_\uln=\{(a, b)\in O_{K_\uln}\mid a\equiv
  b\pmod{2\Z[\zeta_3]}\}. 
\end{equation}
In particular, $A_\uln/(2O_{K_\uln})\simeq
\F_4$. Hence  $[O_{K_\uln}:
A_\uln]=4$, and  the overorders of $A_\uln$ are precisely $O_{K_\uln}$ and  $A_\uln$ itself. From
(\ref{eq:29}),  $\scrE_\uln=K_{\uln}$, so the endomorphism
ring $O_\Lambda$ of any 
$\scrA_\uln$-lattice $\Lambda\subset V$ is an overorder of
$A_\uln$. Thus  $O_\Lambda$ is equal to either $A_\uln$ or
$O_{K_\uln}$. Since $h(A_\uln)=h(O_{K_{\uln}})=1$ by
\cite[(5.8)]{xue-yang-yu:sp_as2}, we conclude that
\begin{equation}
  \label{eq:42}
  o(3,6)=\prod_{\ell\in S(\uln,  p)}\abs{\scrL_\ell(\uln)}.\qedhere
\end{equation}
\end{proof}

\begin{lem}
$ o(3, 6)=2$ if $p=3$. 
\end{lem}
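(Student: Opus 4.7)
The plan is to apply Lemma~\ref{lem:3-6-step1}, which reduces the task to computing $\abs{\scrL_2(\uln)}\cdot \abs{\scrL_3(\uln)}$ in view of $S(\uln,3)=\{2,3\}$.

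At $\ell=3$: the conductor index $i(\uln)=4$ is a unit in $\Z_3$, so $A_{\uln,3}=A_{n_1,3}\times A_{n_2,3}$ and $\scrA_{\uln,3}\simeq \scrA_{n_1,3}\times \scrA_{n_2,3}$, with $V\otimes \Q_3$ splitting accordingly into the product of the simple $\scrK_{n_i,3}$-modules $V_{n_i}\otimes \Q_3$. For both $n_i\in \{3,6\}$, the completion $K_{n_i,3}$ is the same totally ramified quadratic extension $\Q_3(\zeta_3)/\Q_3$, so each $\scrA_{n_i,3}$ is Bass with Eichler invariant $-1$ and the proof of Lemma~\ref{lem:simple-module-ram} applies verbatim. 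Thus each $V_{n_i}\otimes \Q_3$ admits a unique isomorphism class of $\scrA_{n_i,3}$-lattices, and hence $\abs{\scrL_3(\uln)}=1$.

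The substantive step is at $\ell=2$. Because $2$ is inert in $\Q(\zeta_3)$ one has $A_{n_i,2}=\Z_4$, and since $p=3\neq 2$, $\calO_2\simeq \Mat_2(\Z_2)$. Consequently,
\[
\scrA_{\uln,2}\simeq A_{\uln,2}\otimes_{\Z_2}\Mat_2(\Z_2)\simeq \Mat_2(A_{\uln,2}),
\]
which is Morita equivalent to the commutative order $A_{\uln,2}\subset K_{\uln,2}=\Q_4\times \Q_4$. Under this equivalence, classifying $\scrA_{\uln,2}$-lattices in $V\otimes \Q_2$ becomes the much simpler problem of classifying $A_{\uln,2}$-lattices in $K_{\uln,2}$ as a free rank-one module over itself.

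I would then show that $A_{\uln,2}=\{(a,b)\in \Z_4\times \Z_4\mid a\equiv b\pmod{2}\}$ is local with maximal ideal $\grm=2\Z_4\times 2\Z_4$ and residue field $\F_4$; since $O_{K_\uln,2}/A_{\uln,2}\simeq \F_4$ is simple as an $A_{\uln,2}$-module, the only overorders of $A_{\uln,2}$ are itself and $O_{K_\uln,2}$. A short verification (for instance via the numerical criterion $\dim_{\F_2}(O_{K_\uln,2}/A_{\uln,2})=\dim_{\F_2}(A_{\uln,2}/\grc)=2$ for the conductor $\grc=\grm$) shows that $A_{\uln,2}$ is Gorenstein, hence Bass. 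By Lemma~\ref{lem:Gorenstein}, every proper $A_{\uln,2}$-fractional ideal in $K_{\uln,2}$ is principal, giving one class represented by $A_{\uln,2}$; the remaining lattices have associated order $O_{K_\uln,2}$, which is a product of DVRs, so they form a single class represented by $O_{K_\uln,2}$. Therefore $\abs{\scrL_2(\uln)}=2$, and combining yields $o(3,6)=2\cdot 1=2$. The main technical hurdle is the $\ell=2$ factor, where directly handling the non-maximal subdirect-sum order $\scrA_{\uln,2}$ would be awkward; the Morita reduction tames it into a short commutative overorder analysis.
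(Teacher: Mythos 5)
Your proposal is correct and follows the same route as the paper: apply Lemma~\ref{lem:3-6-step1}, reduce to $\ell=2$ and $\ell=3$, use Morita equivalence at $\ell=2$ to pass to the commutative order $A_{\uln,2}$ and then Lemma~\ref{lem:Gorenstein}, and use Lemma~\ref{lem:simple-module-ram} (via the Eichler invariant $-1$ at the ramified prime $3$) to get $\abs{\scrL_3(\uln)}=1$. The only substantive addition you make is actually verifying that $A_{\uln,2}$ is Gorenstein (via the conductor-length criterion) and that its only proper overorder is $O_{K_\uln,2}$, both of which the paper asserts without detail, so your writeup is if anything slightly more complete on that point.
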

\begin{proof}
 From (\ref{eq:31}), $S(\uln,
  3)=\{2, 3\}$, so we need to classify $\scrA_{\uln, \ell}$-lattices
  in $V\otimes \Q_\ell$ for both $\ell=2,3$.  For $\ell=2$, we have $\calO_2=\Mat_2(\Z_2)$. Hence
 \begin{equation}
   \label{eq:44}
   \scrA_{\uln, 2}=A_{\uln, 2}\otimes_{\Z_2}\calO_2=\Mat_2(A_{\uln, 2}).
 \end{equation}
Recall that $V=K_\uln^2$ by (\ref{eq:29}). Applying Morita's
equivalence, we  reduce the classification of
$\scrA_{\uln, 2}$-lattices in $V\otimes \Q_2$ to that of $A_{\uln, 2}$-lattices
in $K_{\uln, 2}$. Now $A_{\uln, 2}$ is a commutative Bass order over
$\Z_2$, so it follows from Lemma~\ref{lem:Gorenstein} that each $A_{\uln, 2}$-lattice in $K_{\uln, 2}$ is isomorphic to
an overorder of $A_{\uln, 2}$ (namely, either $A_{\uln, 2}$ or $O_{K_\uln,
  2}$).  Therefore, $\abs{\scrL_2(\uln)}=2$.


Next, we compute $\abs{\scrL_3(\uln)}$.   Since $3$ is coprime to
  $[O_{K_\uln}: A_\uln]=4$,  we have
  \begin{align}
    \label{eq:43}
    A_{\uln, 3}&=O_{K_\uln}\otimes_\Z \Z_3=A_{n_1,
    3}\times A_{n_2, 3},\quad \text{and hence}\\
    \scrA_{\uln, 3}&=A_{\uln, 3}\otimes_{\Z_3}\calO_3=\scrA_{n_1,3}\times
    \scrA_{n_2, 3}.  
  \end{align}
Here $\scrA_{n_i, 3}= \Z_3[\zeta_3]\otimes_{\Z_3}\calO_3$ for each
  $i=1,2$.  On
  the other hand,  $V_{n_i}\otimes \Q_3\simeq K_{n_i, 3}^2$  by (\ref{eq:29}).   It
  follows from Lemma~\ref{lem:simple-module-ram} that
  $\abs{\scrL_3(\uln)}=1$.

  We conclude from Lemma~\ref{lem:3-6-step1} that $o(3, 6)=2\cdot 1=2$
  when $p=3$. 
\end{proof}

\begin{prop}\label{lem:o-3-6-p-2}
  $o(3,6)=8$ if $p=2$. 
\end{prop}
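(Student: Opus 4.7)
The plan is to follow the three-step strategy of Lemma~\ref{lem:o-1-2}, suitably adapted for the non-maximal Eichler structure that now appears at each factor. By Lemma~\ref{lem:3-6-step1}, $o(3,6)=\abs{\scrL_2(\uln)}$, so the task reduces to enumerating iso classes of $\scrA_{\uln, 2}$-lattices in $V\otimes \Q_2=V_{n_1, 2}\oplus V_{n_2, 2}$.

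First I would analyze the local orders. Since $2$ is inert in both $K_{n_1}=\Q(\zeta_3)$ and $K_{n_2}=\Q(\zeta_6)$, the description \eqref{eq:39} gives $A_{\uln, 2}=\{(a,b)\in \Z_4\times \Z_4: a\equiv b\pmod 2\}$, and each factor $\scrA_{n_i, 2}=\Z_4\otimes_{\Z_2} \calO_2 \simeq \left[\begin{smallmatrix}\Z_4 & \Z_4 \\ 2\Z_4 & \Z_4\end{smallmatrix}\right]$ is an Eichler order of Eichler invariant $e=1$ (by \cite[Lemma~2.10]{li-xue-yu:unit-gp}, as in the proof of $o(3,4)=2$). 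Setting $\bbO_{\uln, 2}:=\scrA_{n_1, 2}\times \scrA_{n_2,2}$, one has $\scrA_{\uln, 2}=\{(x,y)\in \bbO_{\uln, 2}:x\equiv y\pmod 2\}$, the fiber product over the common reduction. Applying the Eichler lattice classification in Section~\ref{sec:isotypic-case} (with $e=u=1$) gives two iso classes of $\scrA_{n_i, 2}$-lattices in $V_{n_i, 2}\simeq \Q_4^2$, namely $\mathcal T_A:=\left[\begin{smallmatrix}\Z_4\\ \Z_4\end{smallmatrix}\right]$ and $\mathcal T_B:=\left[\begin{smallmatrix}\Z_4\\ 2\Z_4\end{smallmatrix}\right]$, and hence four iso classes of $\bbO_{\uln, 2}$-lattices $\Lambda^\natural_{XY}:=\mathcal T_X\oplus \mathcal T_Y$ with $X,Y\in \{A, B\}$.

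The heart of the argument is to show that for each $\Lambda^\natural_{XY}$, the number of iso classes of $\scrA_{\uln, 2}$-sublattices $\Lambda$ with $\bbO_{\uln, 2}\Lambda=\Lambda^\natural_{XY}$ is $3$ when $X=Y$ and $1$ when $X\neq Y$, giving $3+3+1+1=8$ in total. Since $2\in \scrA_{\uln, 2}$, every such $\Lambda$ corresponds to a sub-$\bar\scrA$-module $\bar\Lambda:=\Lambda/2\Lambda^\natural$ of $\bar\Lambda^\natural=\bar{\mathcal T}_X\oplus \bar{\mathcal T}_Y$ surjecting onto each factor, where $\bar\scrA:=\scrA_{\uln, 2}/2\bbO_{\uln, 2}$ embeds diagonally in $\bar\scrA_n\times \bar\scrA_n$ with $\bar\scrA_n:=\scrA_{n_i, 2}/2\scrA_{n_i, 2}$. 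One then computes that $\bar{\mathcal T}_A$ and $\bar{\mathcal T}_B$ are the two non-isomorphic indecomposable $\bar\scrA_n$-modules of $\mathbb F_4$-dimension $2$, namely the projective covers of the two simple $\bar\scrA_n$-modules corresponding to the two primitive idempotents of $\bar\scrA_n/\mathrm{rad}(\bar\scrA_n)\simeq \mathbb F_4\times \mathbb F_4$, and their unique proper quotients $\bar{\mathcal T}_A/U_A$ and $\bar{\mathcal T}_B/U_B$ are the two non-isomorphic simples. Goursat's lemma, together with the transitive scalar action of the stabilizer $\Z_4^\times\times \Z_4^\times$ of $[\Lambda^\natural]$ in $K_{\uln, 2}^\times$ (each factor acting via its residue character $\mathbb F_4^\times$ on $\bar{\mathcal T}_X$), then gives in cases $(X,X)$ three iso classes—the saturated class $\Lambda=\Lambda^\natural$, a ``big'' graph of index $4$ coming from the tuple $(U_X,U_X,\phi)$, and a ``small'' graph of index $16$ coming from $(0,0,\phi)$—while in cases $(X,Y)$ with $X\neq Y$ no non-trivial subquotients of $\bar{\mathcal T}_A, \bar{\mathcal T}_B$ are isomorphic, leaving only $\Lambda=\Lambda^\natural$.

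The main obstacle is the module-theoretic computation in the third paragraph: pinning down the indecomposable structure of $\bar{\mathcal T}_A$ and $\bar{\mathcal T}_B$ as $\bar\scrA_n$-modules (including the non-isomorphism $\bar{\mathcal T}_A\not\simeq \bar{\mathcal T}_B$ and the non-isomorphism of their simple quotients), then running Goursat's lemma carefully and checking that the transitive residue action collapses each parameterized Goursat family to a single iso class. One must also verify that the non-trivial graph submodules actually have $\bbO$-saturation equal to $\Lambda^\natural$, which follows from the fact that the projections of a graph onto each factor are surjective.
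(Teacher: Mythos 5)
Your proposal is correct and follows essentially the same approach as the paper: reduce via Lemma~\ref{lem:3-6-step1} to classifying $\scrA_{\uln,2}$-lattices, reduce modulo $2\scrB$ to submodules of $\bar\Delta$ surjecting onto each factor, identify the two non-isomorphic $2$-dimensional $\F_4$-modules with unique $1$-dimensional radicals, and count $3+3+1+1=8$ using the $\F_4^\times\times\F_4^\times$-action. The paper carries out the module-theoretic bookkeeping by a direct dimension-by-dimension enumeration of submodules of $\bar\Delta$ rather than invoking Goursat's lemma explicitly, but the underlying structure (saturated lattice, index-$4$ ``big graph,'' index-$16$ ``small graph'' when the two columns match, only the saturated lattice when they differ) is exactly what you describe.
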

\begin{proof}
  In this case, $S(\uln, 2)=\{2\}$ by (\ref{eq:31}).  Let
  $\Q_4$ be the unique unramified quadratic extension of $\Q_2$,
  and $\Z_4$ be its ring of integers. Then $A_{n_i, 2}=\Z_2[\zeta_3]=\Z_4$ for
  both $i=1,2$.  The same calculation as in (\ref{eq:33}) shows that
  \begin{equation}
    \label{eq:45}
    O_{K_\uln, 2}\otimes_{\Z_2}\calO_2= (\Z_4\times
    \Z_4)\otimes_{\Z_2}\calO_2\simeq 
    \begin{bmatrix}
      \Z_4& \Z_4\\ 2\Z_4 & \Z_4
    \end{bmatrix}\times \begin{bmatrix}
      \Z_4& \Z_4\\ 2\Z_4 & \Z_4
    \end{bmatrix}.
  \end{equation}
  For simplicity, put $\bbE:=\left[
    \begin{smallmatrix}
      \Z_4 & \Z_4\\ 2\Z_4 & \Z_4 
    \end{smallmatrix}\right]$ and identify $(O_{K_\uln,
    2}\otimes_{\Z_2}\calO_2)$ with $\scrB:=\bbE\times \bbE$. Then it
  follows from (\ref{eq:39}) that 
  \begin{equation}
    \label{eq:46}
    \scrA_{\uln, 2}=\{ (x, y)\in \scrB\mid a\equiv
    b\pmod{2\bbE}\}. 
  \end{equation}
In particular, $2\scrB$ is a two-sided ideal of
$\scrA_{\uln, 2}$ contained in $\grJ(\scrA_{\uln, 2})$.  We put
\begin{equation}\label{eq:52}
  \bar\scrA_{\uln, 2}:=\scrA_{\uln,
  2}/(2\scrB)=\bbE/2\bbE,\qquad
\bar\scrB:=\scrB/2\scrB=(\bbE/2\bbE)\times (\bbE/2\bbE),  
\end{equation}
where $\bar\scrA_{\uln, 2}$  embeds into
 $\bar\scrB$ diagonally.

From  (\ref{eq:29}), $V_{n_i}\otimes\Q_2=K_{n_i,
  2}^2=
\left[\begin{smallmatrix}
  \Q_4\\ \Q_4
\end{smallmatrix}\right]
$ for each $i=1, 2$.  For simplicity, we identify $V\otimes \Q_2$ with
$\Mat_2(\Q_4)$ and regard the $i$-th column as a $\scrK_{n_i,
  2}$-module for each $i$. 
Any $\scrB$-lattice in
$V\otimes \Q_2$ is isomorphic to one of the following
\begin{equation}
  \label{eq:49}
\begin{bmatrix}
    \Z_4& \Z_4 \\ 2\Z_4 & 2\Z_4
  \end{bmatrix},\qquad  \begin{bmatrix}
    \Z_4& \Z_4\\
    \Z_4&  \Z_4
  \end{bmatrix},\qquad \begin{bmatrix}
    \Z_4 & \Z_4 \\ 2\Z_4 & \Z_4
  \end{bmatrix},\qquad \begin{bmatrix}
    \Z_4&  \Z_4 \\     \Z_4& 2\Z_4
  \end{bmatrix}.
\end{equation}
From Lemma~\ref{lem:3-6-step1},  we are only concerned with $\scrA_{\uln, 2}$-lattices in
$V\otimes \Q_2$, so for ease of notation we  drop the
subscript $_2$ from $\Lambda_2$ and write $\Lambda$ for  an $\scrA_{\uln, 2}$-lattice
in  $V\otimes\Q_2$.
Replacing $\Lambda$ by $g\Lambda$ for a
suitable $g\in \scrE_{\uln, 2}^\times$ if necessary, we  assume that
$\scrB\Lambda$ is equal to one of the $\scrB$-lattices $\Delta$ in
(\ref{eq:49}).   Clearly    $2\Delta\subseteq \Lambda\subseteq \Delta$ since 
$2\scrB\subseteq \scrA_{\uln, 2}$. Thus $\bar\Lambda:=\Lambda/(2\Delta)$ is an
$\bar\scrA_{\uln, 2}$-submodule of $\bar{\Delta}:=\Delta/2\Delta$. Moreover, 
as a $\bar\scrB$-module, $\bar{\Delta}$ is spanned
by $\bar\Lambda$.  Fix one $\Delta$ in (\ref{eq:49}). 
Let $\grS(\Delta)$ be the set of $\scrA_{\uln, 2}$-sublattices $\Lambda$ of $\Delta$
satisfying $\scrB \Lambda=\Delta$, and $\grS(\bar \Delta)$  
be the set of $\bar\scrA_{\uln,
  2}$-submodules $\bar M$ of $\bar \Delta$ satisfying $\bar\scrB\bar M=\bar
\Delta$.  For any $\bar M\in \grS(\bar \Delta)$, there is a unique $\scrA_{\uln,
  2}$-sublattice  
$\Lambda$ of $\Delta$ satisfying $\Lambda\supseteq 2\Delta$ and 
$\bar\Lambda=\bar M$. Moreover, $\scrB\Lambda=\Delta$ by  Nakayama's lemma.  Hence the association $\Lambda\mapsto \bar
\Lambda$ induces a bijective map: 
\begin{equation}
  \label{eq:50}
  \grS(\Delta)\xrightarrow{\sim} \grS(\bar \Delta), \qquad \Lambda\mapsto \bar \Lambda.
\end{equation}
The group $\End_{\scrB}(\Delta)^\times$ acts on both $\grS(\Delta)$
and $\grS(\bar \Delta)$, and the map in (\ref{eq:50}) 
is $\End_{\scrB}(\Delta)^\times$-equivariant as well.  
Two members $\Lambda, \Lambda'$ of $\grS(\Delta)$ are
$\scrA_{\uln, 2}$-isomorphic if and only if there exists an 
$\alpha\in \End_{\scrB}(\Delta)^\times$ such that $\alpha
\Lambda=\Lambda'$.  Therefore, we have established the following bijection 
\begin{equation}
  \label{eq:51}
  \left\{\parbox{1.6in}{Isomorphism classes of $\scrA_{\uln,
        2}$-lattices $\Lambda$ in $V\otimes \Q_2$ with $\scrB
      \Lambda\simeq \Delta$} \right\}\longleftrightarrow\left\{\parbox{1.8in}{$\End_{\scrB}(\Delta)^\times$-equivalent classes of $\bar\scrA_{\uln, 2}$-submodules $\bar M$ in
$\bar \Delta$ such that $\bar\scrB \bar M=\bar \Delta$}\right\}. 
\end{equation}
Note that $\End_{\scrB}(\Delta)=\Z_4\times
\Z_4$ for every $\Delta$ in (\ref{eq:49}), so the action of
$\End_{\scrB}(\Delta)^\times$ on $\grS(\bar \Delta)$ factors through $\F_4^\times\times
\F_4^\times$.

Recall from (\ref{eq:52}) that $\bar\scrA_{\uln, 2}=\bbE/2\bbE$, and
$\bar\scrB =(\bbE/2\bbE)^2$. 
We describe the $4$-dimensional $\F_4$-algebra $\bbE/2\bbE$ more
concretely. Let $R$ be the commutative $\F_4$-algebra $\F_4\times \F_4$. Regard $Q:=R$ as an $(R, R)$-bimodule such
that for each $(a, d)\in R$ and $(b,c)\in Q$, the multiplications are
given by the following rules:
\begin{equation}
  \label{eq:47}
  (a,d) \cdot (b,c) = (ab,dc), \qquad
  (b,c) \cdot (a,d) = (bd, ca).
\end{equation}
We form the trivial extension \cite[Example~1.14]{Lam-noncom-ring} of
$R$ by $Q$ and denote it as $\bar{\bbE}:=\left<
\begin{smallmatrix}
  \F_4 & \F_4\\ \F_4 & \F_4
\end{smallmatrix}\right>$, where $R$ is identified with the diagonal of $\left<
\begin{smallmatrix}
  \F_4 & \F_4\\ \F_4 & \F_4
\end{smallmatrix}\right>$ and $Q$ is identified with the anti-diagonal.
  More explicitly,  the product in $\bar\bbE$ is defined by the
  following rule: 
  \begin{equation}
    \label{eq:55}
\left<\begin{matrix}
      a & b \\
      c & d
    \end{matrix}\right>
\cdot
    \left<\begin{matrix}
      a' & b' \\
      c' & d'
    \end{matrix}\right>
    := \left<\begin{matrix}
      aa'     & ab'+bd' \\
      ca'+dc' & dd'
    \end{matrix}\right>.
  \end{equation}
For each $x\in \Z_4$, let $\bar{x}$ be its canonical image in
$\F_4=\Z_4/2\Z_4$. 
One easily checks that the following map induces an isomorphism
between $\bbE/2\bbE$ and $\bar\bbE$:
\[\begin{bmatrix}
      \Z_4& \Z_4\\ 2\Z_4 & \Z_4
    \end{bmatrix}\to \left<\begin{matrix}
      \F_4 & \F_4 \\
      \F_4 & \F_4
    \end{matrix}\right>, \qquad \begin{bmatrix}
      x& y\\ 2z & w
    \end{bmatrix}\mapsto \left<\begin{matrix}
      \bar{x} & \bar{y} \\
      \bar{z} & \bar{w}
    \end{matrix}\right>.\]
Henceforth $\bbE/2\bbE$ will be identified with $\bar\bbE$ via this
induced isomorphism.  Each column of $\bar\bbE$  admits a canonical 
$\bar\bbE$-module  structure. These two $\bar\bbE$-modules will be denoted by $\left[\begin{smallmatrix}
  \F_4\\ \F_4
\end{smallmatrix}\right]^\dagger$ and  $\left[\begin{smallmatrix}
  \F_4\\ \F_4
\end{smallmatrix}\right]^\ddagger$ respectively. It is clear from (\ref{eq:55}) that  $\left[\begin{smallmatrix}
 0\\ \F_4
\end{smallmatrix}\right]^\dagger$ (resp.~$\left[\begin{smallmatrix}
   \F_4\\0
\end{smallmatrix}\right]^\ddagger$) is the unique $1$-dimensional (as an
$\F_4$-vector space) $\bar\bbE$-submodule of $\left[\begin{smallmatrix}
  \F_4\\ \F_4
\end{smallmatrix}\right]^\dagger$ (resp.~$\left[\begin{smallmatrix}
  \F_4\\ \F_4
\end{smallmatrix}\right]^\ddagger$). We leave it as a simple exercise
to check 
that $\left[\begin{smallmatrix}
  \F_4\\ \F_4
\end{smallmatrix}\right]^\dagger$ and  $\left[\begin{smallmatrix}
  \F_4\\ \F_4
\end{smallmatrix}\right]^\ddagger$ are non-isomorphic $\bar \bbE$-modules.


Now for each $\Delta$ in (\ref{eq:49}), we classify the $(\F_4^\times\times
\F_4^\times)$-orbits of $\grS(\bar{\Delta})$. For $i=1,2$, let $\Delta_i$ be the
$i$-th column of $\Delta$  so that $\bar{\Delta}=\bar{\Delta}_1\times \bar{\Delta}_2$. Let
$\bar M\subseteq \bar{\Delta}$ be an $\bar\bbE$-submodule, and
$\pr_i: \bar{M}\to \bar{\Delta}_i$ be the projection map to the 
factor $\bar{\Delta}_i$. Then $(\bar\bbE\times \bar\bbE)\bar{M}=\bar{\Delta}$ if
and only if both $\pr_i$ are surjective. Thus
\begin{equation}
  \label{eq:54}
\dim_{\F_4}\bar{M}\geq
2\quad \text{for every}\quad \bar{M}\in \grS(\bar{\Delta}).
\end{equation}
Suppose that  $\bar{M}\in \grS(\bar{\Delta})$ from now on. 

If $\dim_{\F_4}\bar{M}=2$, then both
$\pr_i$ are $\bar\bbE$-isomorphisms, and $\bar{M}$ is the graph of the
isomorphism $\pr_2\circ \pr_1^{-1}: \bar{\Delta}_1\to
\bar{\Delta}_2$. Necessarily,  $\Delta$ is equal to either $\left[\begin{smallmatrix}
    \Z_4& \Z_4\\
    2\Z_4& 2\Z_4
  \end{smallmatrix}\right]$ or $\left[\begin{smallmatrix}
    \Z_4& \Z_4\\
    \Z_4& \Z_4
  \end{smallmatrix}\right]$. In these two cases, any isomorphism
$\bar{\Delta}_1\to \bar{\Delta}_2$ is a scalar multiplication by
$\F_4^\times$. After a suitable multiplication by an element of
$\F_4^\times\times \F_4^\times$, we may identify $\bar{M}$ with the
diagonal of $\bar{\Delta}_1\times \bar{\Delta}_2$.

Next, suppose that $\dim_{\F_4}\bar{M}=3$. Then $\ker(\pr_1)$ is 
a $1$-dimensional submodule of $\bar{M}\cap \bar{\Delta}_2$, so it must 
coincide with the unique $1$-dimensional submodule of $\bar{\Delta}_2$.
A similar result holds for $\ker(\pr_2)$.   We claim that $\bar{\Delta}_1\simeq \bar{\Delta}_2$ in this case as well. If not, without lose
of generality,  we may assume that $\bar{\Delta}_1=\left[\begin{smallmatrix}
  \F_4\\ \F_4
\end{smallmatrix}\right]^\dagger$ and $\bar{\Delta}_2=\left[\begin{smallmatrix}
  \F_4\\ \F_4
\end{smallmatrix}\right]^\ddagger$ so that $\bar{\Delta}=\bar\bbE$.  By the above discussion,
$\bar{M}\supseteq \left<
  \begin{smallmatrix}
    0 & \F_4\\ \F_4 & 0
  \end{smallmatrix}
\right>$. Since $\dim_{\F_4}\bar{M}=3$,
there exists  $u, v\in \F_4^\times$ such that
\[\bar{M}=\F_4\left<
  \begin{matrix}
    u & 0\\ 0 & v
  \end{matrix}
\right>+\F_4 \left<
  \begin{matrix}
    0 & 0\\ 1 & 0
  \end{matrix}
\right>+\F_4 \left<
  \begin{matrix}
    0 &  1\\ 0 & 0
  \end{matrix}
\right>.\]
  Indeed, both $u$ and $v$ have to be  nonzero  
 since $\pr_i$ is surjective for each $i=1, 2$.  However, $\left<
  \begin{smallmatrix}
    u & 0\\ 0 & v
  \end{smallmatrix}
\right>\in \bar\bbE^\times$, which implies that
$\bar{M}=\bar\bbE=\bar{\Delta}$. 
This  contradicts $\dim_{\F_4}\bar{M}=3$ and 
verifies our claim.  It remains to consider the cases 
 $\bar{\Delta}=\left[\begin{smallmatrix}
  \F_4\\ \F_4
\end{smallmatrix}\right]^\dagger\times \left[\begin{smallmatrix}
  \F_4\\ \F_4
\end{smallmatrix}\right]^\dagger$ or  $\bar{\Delta}=\left[\begin{smallmatrix}
  \F_4\\ \F_4
\end{smallmatrix}\right]^\ddagger\times \left[\begin{smallmatrix}
  \F_4\\ \F_4
\end{smallmatrix}\right]^\ddagger$. 
For simplicity,  we write them as 
$\Mat_2(\F_4)^\dagger$ and $\Mat_2(\F_4)^\ddagger$
respectively. Suppose that $\bar{\Delta}=\Mat_2(\F_4)^\dagger$. Then there exists $u, v\in \F_4^\times$ such
that 
\[\bar{M}=\F_4
  \begin{bmatrix}
    u & v \\ 0 & 0 
  \end{bmatrix}+   \F_4\begin{bmatrix}
    0 & 0 \\ 1 & 0 
  \end{bmatrix}+   \F_4\begin{bmatrix}
    0 & 0 \\ 0 & 1 
  \end{bmatrix}.
\]
Multiplication  by $(u^{-1}, v^{-1})\in \F_4^\times\times
\F_4^\times$ sends $\bar{M}$ to 
\[\bar{M}_0:=\left\{
    \begin{bmatrix}
      a & b \\ c & d
    \end{bmatrix}\in \Mat_2(\F_4)^\dagger\, \middle\vert\,  a=b
  \right\}.\]
Thus all 3-dimensional members of $\grS(\Mat_2(\F_4)^\dagger)$
are in the same $(\F_4^\times\times
\F_4^\times)$-orbit. 
A similar result holds for $\bar{\Delta}=\Mat_2(\F_4)^\ddagger$. 

Lastly, if $\dim_{\F_4}\bar{M}=4$, then $\bar{M}=\bar{\Delta}$. 

Combining (\ref{eq:49}), (\ref{eq:51}) and the above classifications,
we find that
\[\abs{\scrL_2(\uln)}=\sum_{\Delta}\abs{(\F_4^\times\times
    \F_4^\times)\backslash \grS(\bar{\Delta})}= 3+3+1+1=8.\]
Thus $o(3, 6)=8$ if $p=2$ according to Lemma~\ref{lem:3-6-step1}. 
\end{proof}

\begin{prop}\label{lem:o-2-2p}
  Suppose that $p\in \{2, 3\}$. Then
  \begin{equation}
    \label{eq:56}
    o(2, 2p)=
    \begin{cases}
      2 &\text{if } p=2,\\
      3&\text{if } p=3.
    \end{cases}
  \end{equation}
\end{prop}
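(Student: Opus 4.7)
The plan follows the template of Lemma~\ref{lem:o-1-2} and Proposition~\ref{lem:o-3-6-p-2}. For both $p\in\{2,3\}$ with $\uln=(2,2p)$ we have $S(\uln,p)=\{p\}$ by \eqref{eq:31}, so $o(\uln)=\sum_{[\Lambda]\in\scrL_p(\uln)}h(O_\Lambda)$. Since $K_{n_2}$ is totally ramified at $p$ with residue field $\F_p$, a direct cyclotomic calculation (in the spirit of \eqref{eq:39}) gives the pullback description $A_{\uln,p}=\Z_p\times_{\F_p}A_{n_2,p}$, and tensoring with the flat $\Z_p$-module $\calO_p$ yields
\[\scrA_{\uln,p}=\calO_p\times_{\calO_p/p\calO_p}\scrA_{n_2,p},\]
where the second factor $\scrA_{n_2,p}$ is a Bass order of Eichler invariant $-1$ in $\Mat_2(K_{n_2,p})$ with unique proper overorder $\Mat_2(A_{n_2,p})$ (by the proof of Lemma~\ref{lem:simple-module-ram}).

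Set $\scrB:=\calO_p\times\scrA_{n_2,p}$. By Lemma~\ref{lem:simple-module-ram} and the local maximality of $\calO_p$ in $D_p$, there is a unique isomorphism class of $\scrB$-lattices in $V\otimes\Q_p\simeq D_p\oplus K_{n_2,p}^2$, represented by $\bbM:=\calO_p\oplus\Lambda_0$, where $\Lambda_0$ denotes the unique $\scrA_{n_2,p}$-lattice in $V_{n_2}\otimes\Q_p$. Rescaling any $\scrA_{\uln,p}$-lattice $\Lambda$ by $\scrE_{\uln,p}^\times$, we may assume $\scrB\Lambda=\bbM$, and then $p\bbM\subseteq\Lambda$ automatically since $p\in\scrA_{\uln,p}$. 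Reduction modulo $p\bbM$ induces a bijection, as in \eqref{eq:51}, between $\scrL_p(\uln)$ and the set of $\End_\scrB(\bbM)^\times$-orbits of $\bar\scrA_{\uln,p}$-submodules $\bar\Lambda\subseteq\bar\bbM:=\bbM/p\bbM$ satisfying $\bar\scrB\bar\Lambda=\bar\bbM$. A direct enumeration of these orbits --- tracking how the ``gluing quotient'' $\calO_p/p\calO_p$ is identified with the corresponding quotient of $\Lambda_0$, and using Lemma~\ref{lem:indecomposable} to list the indecomposable pieces --- should yield $|\scrL_p(\uln)|=2$ when $p=2$ and $|\scrL_p(\uln)|=3$ when $p=3$.

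The third step is the class number computation: for each genus I would pick a representative $\Lambda$, write down $O_\Lambda\subseteq\scrE_\uln=D\times K_{n_2}$, and show $h(O_\Lambda)=1$. All completions of $O_\Lambda$ away from $p$ are maximal, and the ambient maximal global overorder $\bbH:=\calO\times A_{n_2}$ satisfies $h(\bbH)=h(\calO)\cdot h(A_{n_2})=1$ by \eqref{eq:32} and $h(\Z[\zeta_4])=h(\Z[\zeta_6])=1$. Applying \cite[(6.3)]{xue-yang-yu:sp_as2} the class number computation reduces to a surjectivity statement for $\bbH^\times$ onto a finite quotient controlled by $\wh O_{\Lambda,p}^\times$; paralleling the end of the proof of Lemma~\ref{lem:o-1-2}, the surjectivity of $\calO^\times\to(\calO/p\calO)^\times$ (for each of $p=2,3$) follows from Serre's lemma on torsion units together with $h(\calO)=1$, and this forces $h(O_\Lambda)=1$. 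The principal obstacle is the orbit enumeration in step two: unlike the symmetric fiber product of Lemma~\ref{lem:o-1-2}, the asymmetry between the maximal order $\calO_p$ and the non-maximal Bass order $\scrA_{n_2,p}$ requires careful combinatorial bookkeeping, and verification that precisely $2$ or $3$ orbits arise, respectively.
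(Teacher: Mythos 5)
Your setup is correct: $S(\uln,p)=\{p\}$, the pullback description of $\scrA_{\uln,p}$, and the reduction-modulo-an-ideal bijection are all as in the paper's proof. However, you have misallocated where the discrepancy between $p=2$ and $p=3$ enters, and this breaks the second and third steps.

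You predict $|\scrL_p(\uln)|=2$ when $p=2$ and $|\scrL_p(\uln)|=3$ when $p=3$, with all class numbers equal to $1$. In fact the local analysis gives \emph{two} genera for both $p=2$ and $p=3$: every $\scrA_{\uln,p}$-lattice is isomorphic to either $\Delta$ (the unique $\scrC$-lattice, where $\scrC=\calO_p\times\scrA_{n_2,p}$) or a single proper sublattice $\Gamma$, regardless of $p$. The enumeration of $\End_\scrC(\Delta)^\times$-orbits of admissible submodules of $\bar\Delta$ yields exactly two orbits in both cases, and no amount of combinatorial bookkeeping with Lemma~\ref{lem:indecomposable} will produce a third orbit when $p=3$. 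The asymmetry between $p=2$ and $p=3$ lives entirely in the class numbers of the endomorphism orders.

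Your claim that $\calO^\times\to(\calO/p\calO)^\times$ is surjective for both $p=2$ and $p=3$ is false for $p=3$: for $D=D_{3,\infty}$, $|\calO^\times|=12$ while $|(\calO/3\calO)^\times|=72$, so surjectivity cannot hold. Serre's lemma only gives injectivity of $\calO^\times/\{\pm1\}$ into the quotient; it says nothing about surjectivity, which for $p=2$ follows from a separate order count $24/2=12=|(\calO/2\calO)^\times|$. What is actually needed is the index of the image of $\calO^\times$ in $\F_{p^2}^\times=(\calO/\grP)^\times$, where $\grP$ is the prime of $\calO$ above $p$. For $p=2$ this index is $1$, giving $h(\grR)=1$ and $o(2,4)=1+1=2$; for $p=3$ the image is $\{\pm\bar1,\pm\bar\imath\}\subsetneq\F_9^\times$ of index $2$, giving $h(\grR)=2$ and $o(2,6)=1+2=3$. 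You would need to carry out this index computation explicitly (the paper does it by writing down $\calO^\times$ for $D=\qalg{-1}{-3}{\Q}$ and observing that $j\in\grP$); without it, your argument cannot distinguish the two cases and would incorrectly output $o(2,6)=2$.
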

\begin{proof}
  Let $\uln=(n_1, n_2)=(2, 2p)$ for $p\in \{2, 3\}$.  
  Then $K_\uln=\Q\times K_{n_2}$, and
  $\scrK_\uln\simeq D\times \Mat_2(K_{n_2})$ by (\ref{eq:30}) and
  (\ref{eq:29}).  Here $K_{n_2}$ is equal to   $\Q(\sqrt{-1})$ or
  $\Q(\sqrt{-3})$ according to whether $p=2$ or $p=3$. 
  Let $\grp$ be the unique ramified prime ideal
  of $A_{n_2}$ above $p$. From \cite[(5.5) and
  (5.6)]{xue-yang-yu:sp_as2}, we have 
  \begin{equation}
    \label{eq:57}
    A_\uln=\{(a,b)\in \Z\times A_{n_2}\mid a\equiv
    b\pmod{\grp}\},  
  \end{equation}
which is a suborder of index $p$ in $O_{K_\uln}=\Z\times
A_{n_2}$.

  From (\ref{eq:31}), $S(\uln,
  p)=\{p\}$. 
  Clearly, $\calO_p$ is canonically a subring of $\scrA_{n_2,
    p}=A_{n_2}\otimes_\Z\calO_p$, and $(\grp\scrA_{n_2,
    p})\cap \calO_p=p\calO_p$. It follows from (\ref{eq:57}) that 
  \begin{equation}
    \label{eq:58}
    \scrA_{\uln, p}=\{(x, y)\in \calO_p\times \scrA_{n_2, p}\mid x\equiv y
    \pmod{\grp \scrA_{n_2, p}}\}.
  \end{equation}
  For simplicity, let us put $\scrC:=\calO_p\times \scrA_{n_2, p}$, and
  $J:=\grJ(O_{K_\uln, p})=p\Z_p\times \grp$, where $\grp$ denotes 
 the maximal ideal of $A_{n_2, p}$ by an abuse of notation.  Then
  $J\scrC\subseteq \scrA_{\uln,p}$, so we further define three
  quotient rings
\begin{align}
  \label{eq:59}
\bar\calO_p&:=\calO_p/p\calO_p,\\
\bar\scrC&:=\scrC/(J\scrC)=(\calO_p/p\calO_p)\times (\scrA_{n_2,
           p}/\grp\scrA_{n_2, p})=\bar\calO_p\times \bar\calO_p, \label{eq:64}\\
  \bar\scrA_{\uln, p}&:=\scrA_{\uln,p}/(J\scrC)=\bar\calO_p,  \label{eq:66}
\end{align}
where $\bar\scrA_{\uln,p}$ embeds into $\bar\scrC$ diagonally by
(\ref{eq:58}).   
From \cite[Corollary~II.1.7]{vigneras}, $\calO_p$ contains a copy of
$\Z_{p^2}$, and there exists $\eta\in \calO_p$ such that
\begin{equation}
  \label{eq:60}
\calO_p=\Z_{p^2}+\Z_{p^2}\eta, \qquad \eta^2=p, \quad\text{and}\quad x
\eta=\eta \tilde{x}, \quad \forall x\in \Z_{p^2}.     
\end{equation}
Here $x\mapsto \tilde{x}$ denotes the unique nontrivial $\Q_p$-automorphism of
$\Q_{p^2}$. If we write $\bar\eta$ for  the   canonical image of $\eta$ in
$\bar\calO_p$, then 
\begin{equation}
  \label{eq:61}
  \bar\calO_p=\F_{p^2}+\F_{p^2}\bar\eta. 
\end{equation}
In particular, $\dim_{\F_{p^2}}\bar\calO_p=2$, and the Jacobson
radical $\grJ(\bar\calO_p)=\F_{p^2}\bar\eta$ is  the
unique $1$-dimensional submodule of  $\bar\calO_p$.

From (\ref{eq:30}) and (\ref{eq:29}), $V_{n_1}$ is a free module of
rank $1$ over $\scrK_{n_1}\simeq D$, and  $V_{n_2}=K_{n_2}^2$ is a simple
module over 
$\scrK_{n_2}\simeq \Mat_2(K_{n_2})$. Fix a suitable
identification of $\scrK_{n_2, p}=\Mat_2(K_{n_2, p})$ so that the hereditary
closure $\calH(\scrA_{n_2, p})$ is equal to $\Mat_2(A_{n_2, p})$. From
Lemma~\ref{lem:simple-module-ram}, every $\scrC$-lattice in $V\otimes
\Q_p$ is isomorphic to
\begin{equation}
  \label{eq:62}
\Delta:=\Delta_1\times \Delta_2=\calO_p\times
\begin{bmatrix}
  A_{n_2, p}\\ A_{n_2, p}
\end{bmatrix}. 
\end{equation}
Put $\bar\Delta:=\Delta/J\Delta=\bar\Delta_1\times\bar\Delta_2$, where 
\begin{equation}
  \label{eq:63}
 \bar\Delta_1=\calO_p/p\calO_p=\bar\calO_p, \quad\text{and}\quad
 \bar\Delta_2=\Delta_2/\grp\Delta_2. 
\end{equation} The same proof as (\ref{eq:51}) shows that there is a bijection
\begin{equation}
  \label{eq:65}
    \left\{\parbox{1.2in}{Isomorphism classes of $\scrA_{\uln,
        p}$-lattices  in $V\otimes \Q_p$}
  \right\}\longleftrightarrow\left\{\parbox{1.8in}{$\End_{\scrC}(\Delta)^\times$-equivalent classes of $\bar\scrA_{\uln, p}$-submodules $\bar M$ in
$\bar \Delta$ such that $\bar\scrC \bar M=\bar \Delta$}\right\}. 
\end{equation}
Clearly, 
 $\dim_{\F_p} \bar\Delta_2=2$. When regarded as an $\scrA_{n_2,
  p}$-module, $\bar\Delta_2$ is
isomorphic to the unique simple $\scrA_{n_2, p}$-module $\F_{p^2}$ by
(\ref{eq:64}) and (\ref{eq:61}). We fix such an
isomorphism and 
write
$\bar\Delta_2=\F_{p^2}$. Note that 
$\End_{\scrC}(\Delta)=\calO_p\times A_{n_2, p}$, so the action of 
$\End_{\scrC}(\Delta)^\times$   on $\bar \Delta$ factors through
$\bar\calO_p^\times\times \F_p^\times$. Recall that
$\bar\scrC=\bar\calO_p\times \bar\calO_p$ and 
$\bar\scrA_{\uln, p}=\bar\calO_p$ by (\ref{eq:64}) and (\ref{eq:66}). 

Let $\bar{M}\subseteq \bar\Delta$  be an $\bar\calO_p$-submodule, and $\pr_i:
\bar{M}\to \Delta_i$ be the canonical projections for $i=1,2$.  Then  
$(\bar\calO_p\times \bar\calO_p)\bar{M}=\bar\Delta$ if and only if
both $\pr_i$ are surjective. Suppose that this is the case. Then
necessarily \[\dim_{\F_{p^2}}\bar{M}\geq
\dim_{\F_{p^2}}\bar\Delta_1=2. \]
If $\dim_{\F_{p^2}}\bar{M}=3$, then $\bar{M}=\bar\Delta$.

Now suppose that $\dim_{\F_{p^2}}\bar{M}=2$. Then $\ker(\pr_2)$ is a
$1$-dimensional submodule of $\bar\Delta_1$, so $
\ker(\pr_2)=\F_{p^2}\bar\eta$. Therefore, there exists $a, c\in
\F_{p^2}^\times$ such that
\begin{equation}
  \label{eq:67}
  \bar{M}=\F_{p^2}(\bar\eta, 0)+\F_{p^2}(a, c)\subseteq \bar\Delta=
  (\F_{p^2}+\F_{p^2}\bar\eta)\times \F_{p^2}.
\end{equation}
Indeed,  both $a, c$ have to be nonzero since $\pr_i$ is surjective for
each $i=1, 2$.  Multiplication  by $(ca^{-1}, 1)\in
\bar\calO_p^\times\times \F_p^\times$ sends  $\bar{M}$ to the
following submodule of $\bar\Delta$:
\begin{equation}
  \label{eq:68}
  \bar{\Gamma}:=\{(a+b\bar\eta, c)\in \bar\Delta\mid a=c \}.
\end{equation}
Let $\Gamma$ be the unique $\scrA_{\uln, p}$-sublattice of $\Delta$ such
that $\Gamma \supseteq J\Delta$ and $\Gamma/J\Delta=\bar{\Gamma}$. Then every
$\scrA_{\uln, p}$-lattice in $V\otimes\Q_p$ is isomorphic to either
$\Delta$ or $\Gamma$.   Let   $\grP$ be the unique two-sided prime ideal of
$\calO_p$.
We compute
\begin{equation}
  \label{eq:69}
  \begin{split}
  \End_{\scrA_{\uln, p}}(\Gamma)&=\{(x,y)\in \calO_p\times A_{n_2, p}\mid 
  (x, y)\bar{\Gamma}\subseteq \bar{\Gamma}\}\\    
   &=\{(x,y)\in \calO_p\times A_{n_2, p}\mid x\equiv y \pmod{\grP}\}.
  \end{split}
\end{equation}
Here $\grP\cap A_{n_2, p}=\grp $ for any embedding of
$A_{n_2, p}$ into $\calO_p$, so the congruence relation does not depend
on the choice of such an embedding.

We have seen from above that there are two genera of $\scrA_\uln$-lattices
in $V$. According to  \cite[Proposition~V.3.1]{vigneras}, $h(\calO)=1$ for
$p\in \{2, 3\}$, so $\calO$ is the unique maximal order in $D$ up to
$D^\times$-conjugation. If $\wt\Delta$ is an $\scrA_\uln$-lattices
in $V$ with $\wt\Delta\otimes \Z_p=\Delta$, then
\begin{equation}
  \label{eq:70}
  \End_{\scrA_\uln}(\wt\Delta)\simeq \calO\times A_{n_2}.   
\end{equation}
Similarly, if $\wt\Gamma$ is the unique $\scrA_\uln$-sublattice of
$\wt\Delta$ such that 
$\wt\Gamma\otimes \Z_p=\Gamma$, then $  \End_{\scrA_\uln}(\wt\Gamma)$
is the unique suborder of   $\End_{\scrA_\uln}(\wt\Delta)$ satisfying 
\begin{equation}
  \label{eq:71}
  \End_{\scrA_\uln}(\wt\Gamma)\otimes \Z_\ell=
  \begin{cases}
    \End_{\scrA_\uln,p}(\Gamma) &\text{if } \ell= p,\\
      \End_{\scrA_\uln}(\wt\Delta)\otimes \Z_\ell&\text{otherwise}. 
  \end{cases}
\end{equation}
For simplicity, let us  put $\grO:=  \End_{\scrA_\uln}(\wt\Delta)$ 
and $\grR=\End_{\scrA_\uln,p}(\Gamma)$. 
From the general
strategy explained in Section~\ref{sec:gen-strategy}, we have
\begin{equation}
  \label{eq:72}
  o(2, 2p)=h(\grO)+h(\grR) \qquad \text{for } p=2,3.  
\end{equation}
Here $h(\grO)=h(\calO)h(A_{n_2})=1\cdot 1=1$. It remains to compute
$h(\grR)$.

Replacing $\wt\Delta$ by 
$g\wt\Delta$ for a suitable $g\in \scrE_\uln^\times$ if necessary, we
 assume that $\grO=\calO\times A_{n_2}$.  Let $\wh\grO$ (resp.~$\wh\grR$) be the profinite completion of $\grO$
(resp.~$\grR$). We apply
\cite[(6.3)]{xue-yang-yu:sp_as2}  to obtain
\begin{equation}
  \label{eq:73}
  h(\grR)=\abs{\grO^\times\backslash \wh
    \grO^\times/\wh\grR^\times}=\abs{\grO^\times\backslash \grO_p^\times/\grR_p^\times}.
\end{equation}
From (\ref{eq:69}), $\grP\times \grp \subseteq \grR_p$, and
\[\grO_p/(\grP\times \grp)=\F_{p^2}\times \F_p,\qquad  \grR_p/
  (\grP\times \grp)=\F_p. \]
where $\F_p$ embeds into $\F_{p^2}\times \F_p$ diagonally. It follows
that $\grO_p^\times/\grR_p^\times$ can be further simplified into
$(\F_{p^2}^\times\times \F_p^\times)/\diag(\F_p^\times)$. On the other
hand, $\grO^\times=\calO^\times\times A_{n_2}^\times$. Since $p\in \{2,
3\}$, the natural
map $\grO^\times\to \F_{p^2}^\times\times \F_p^\times$ sends
$\{\pm 1\}\times A_{n_2}^\times$ onto $\F_p^\times\times \F_p^\times$,
which contains $\diag(\F_p^\times)$. Therefore, 
\begin{equation}
  \label{eq:74}
  \begin{split}
  h(\grR)&=\abs{\grO^\times\backslash \grO_p^\times/\grR_p^\times}=\abs{(\calO^\times\times A_{n_2}^\times)\backslash (\F_{p^2}^\times\times
    \F_p^\times)/\diag(\F_p^\times)}\\&=\abs{(\calO^\times\times \{1\})\backslash(\F_{p^2}^\times\times
    \F_p^\times)/(\F_p^\times\times \F_p^\times)}=\abs{\calO^\times\backslash
    \F_{p^2}^\times/\F_p^\times}=\abs{\calO^\times\backslash
    \F_{p^2}^\times}.    
  \end{split}
\end{equation}
Here we have used freely the commutativity of $\F_{p^2}^\times\times
    \F_p^\times$. 



If $p=2$, we have already shown  in the proof of Lemma~\ref{lem:o-1-2}
that the canonical map $\calO^\times\to
(\calO/2\calO)^\times$ is surjective (see (\ref{eq:76})). Consequently,
$\calO^\times\to (\calO_p/\grP)^\times=\F_4^\times$ is surjective
as well. Thus $h(\grR)=1$ if $p=2$.  

Next, suppose that $p=3$. According to \cite[Exercise~III.5.2]{vigneras}, 
\[D=\qalg{-1}{-3}{\Q}, \quad \text{and} \quad \calO=\Z+\Z i+\Z\frac{1+j}{2}+\Z\frac{i(1+j)}{2}.\]
Here $\{1, i, j, ij\}$ is the standard $\Q$-basis of $D$. We have
\[\calO^\times=\left\{\pm 1,\quad \pm i,\quad \pm \frac{1\pm j}{2},
    \quad \pm \frac{i(1\pm j)}{2}\right\}.\]
Note that $j\in \grP$, so the image of $\calO^\times$ in
$(\calO_p/\grP)^\times=\F_9^\times$ is equal to $\{\pm \bar 1, \pm
\bar i\}$,
which has index $2$ in $\F_9^\times$. Therefore, $h(\grR)=2$ if
$p=3$. 

In conclusion, we find that $o(2, 4)=1+1=2$ if $p=2$, and $o(2,
6)=1+2=3$ if $p=3$. 
\end{proof}

\begin{rem}
Assume that $p\in \{2, 3\}$. We have shown that except for $(\uln,p)=((2,6), 3)$ (and $((1,3), 3)$ by \cite[Remark~3.2(i)]{xue-yang-yu:sp_as2}), every genus in the set $\scrL(\uln)$ of lattice classes has class number one. For the exceptional cases, $\scrL(\uln)$ consists of two genera; one genus has class number one while the other one has class number two. 
\end{rem}

\begin{proof}[Concluding the Proof of Theorem~\ref{thm:main}]
For $p=2,3,5$, the value of $o(\uln)$ is listed in the following table
(see 
\cite{xue-yang-yu:sp_as2} 
for the values not covered in the present paper):

\begin{center}
\begin{tabular}{*{13}{|>{$}c<{$}}|}
  \hline
  \uln    & 3 & 4 & 5 & 8 & 12  & (1, 2) & (2, 3) & (2, 4) & (2, 6) & (3,4) & (3,6)   \\
  \hline
  p=2 & 3 & 2 & 2 & 1 & 3 & 3 & 2 & 2 & 4 & 2 & 8   \\
  \hline
  p=3 & 2 & 3 & 2 & 4 & 3 & 3 & 1 & 4 & 3 & 2& 2 \\
  \hline
  p=5 & 3 & 1 & 1 & 4 & 4 & 4 & 2 & 0 & 6 & 0 & 8  \\
  \hline
\end{tabular}
\end{center}
Formula (\ref{eq:9}) is obtained by plugging in the above values in
\eqref{eq:1}. 
\end{proof}

\section*{Acknowledgments}
  J.~Xue is partially supported
by Natural Science Foundation of China grant \#11601395. 
C.-F.~Yu is partially supported
by the MoST grant 109-2115-M-001-002-MY3.

\bibliographystyle{hplain}
\bibliography{TeXBiB}
\end{document}